\newcommand{\nicecolor}{Navy}  
\setlist[1]{wide}
\setlist[2]{leftmargin=15mm} 
\setlist[enumerate]{label=\rm{(\arabic*)}}
\setlist[enumerate,2]{label=\rm({\it\roman*}), }
\setlist[itemize]{label=\raisebox{0.25ex}{\tiny$\bullet$}}
\newtheorem{mytheorem}{Theorem}
\newtheorem{lemma}{Lemma}[section]
\newtheorem{corollary}[lemma]{Corollary}
\newtheorem{theorem}[lemma]{Theorem}
\newtheorem{question}[lemma]{Question}
\theoremstyle{definition}
\newtheorem{definition}[lemma]{Definition}
\theoremstyle{remark}
\newtheorem{remark}[lemma]{Remark}
\newtheorem{example}[lemma]{Example}
\newcommand\dashdownarrowi{\mathchoice%
    {\rotatebox[origin=c]{-90}{$\displaystyle\dashrightarrow$}}%
    {\rotatebox[origin=c]{-90}{$\displaystyle\dashrightarrow$}}%
    {\rotatebox[origin=c]{-90}{$\scriptstyle\dashrightarrow$}}%
    {\rotatebox[origin=c]{-90}{$\scriptscriptstyle\dashrightarrow$}}%
   } 
\newcommand{\dashdownarrow}{\mathrel{\dashdownarrowi}} 
\newcommand{\hookuparrow}{\mathrel{\rotatebox[origin=c]{90}{$\hookrightarrow$}}}
\author{Ahmed Abouelsaad}
\curraddr{Department of Mathematics and Computer Science, University of Basel, 4051 Basel, Switzerland}
\email{a.abouelsaad@unibas.ch}
\address{Mathematics Department, Faculty of Science, Mansoura University,
Mansoura 35516, Egypt}
\email{abuelsaad@mans.edu.eg}
\thanks{The author acknowledges support from the Federal Commission for Scholarships for Foreign Students (FCS)}
\subjclass{14E07, 14H05, 14H50}
\keywords{Galois points, Cremona transformations, Galois groups, Jonquières maps}
\begin{document}

\title{Galois Points and Cremona Transformations}

\maketitle 
\begin{abstract}

In this article, we study Galois points of plane curves and the extension of the corresponding Galois group to $\mathrm{Bir}(\mathbb{P}^2)$.\\
If the Galois group has order at most $3$, we prove that it always extends to a subgroup of the Jonquières group associated to the point $P$.\\
In degree at least $4$, we prove that it is false. We provide an example of a Galois extension whose Galois group is extendable to Cremona transformations but not to a group of de Jonquières maps with respect to $P$. We also give an example of a Galois extension whose Galois group cannot be extended to Cremona transformations.
\end{abstract}
\section{Introduction}%
Let $k$ be an algebraically closed field. Let $C$ be an irreducible plane curve in $\mathbb{P}^2$. Giving a point $P\in\mathbb{P}^2$, we consider the projection $\pi_P |_C: C\dashrightarrow\mathbb{P}^1$, which is the restriction of the projection $\pi_P:\mathbb{P}^2 \dashrightarrow \mathbb{P}^1$ with center $P$. Let $K_P = \pi^*_P(k(\mathbb{P}^1))$. We say that $P$ is Galois if $k(C)/K_{P}$ is Galois and denote by $G_{P}$ the corresponding Galois group in this case.\\[1mm]
A de Jonquières map is a birational map $\varphi$ for which there exist points $P, Q\in \mathbb{P}^2$ such that $\varphi$ sends the pencil of lines passing through $P$ to the pencil of lines passing through $Q$. The group of a de Jonquières transformations preserving the pencil of lines passing through a given point $P \in \mathbb{P}^2$ is denoted by $\mathrm{Jonq}_{P} \subset \mathrm{Bir}(\mathbb{P}^2)$.\\
As in \cite{Mr1048550}, we are interested in the extension of elements of $G_P$ to $\mathrm{Bir}(\mathbb{P}^2)$. There are two interesting questions:
\begin{question}\label{Q1}%
If $P$ is Galois, does $G_P$ extends to $\mathrm{Bir}(\mathbb{P}^2)$?
\end{question}
\begin{question}\label{Q2}
   If an element extends to $\mathrm{Bir}(\mathbb{P}^2)$, does it extend to a de Jonquières map? $i.e.$ to an element $\varphi\in \mathrm{Bir}(\mathbb{P}^2)$ with $\pi_{P}\circ\varphi=\pi_P$?
\end{question}
Consider a point $P\in \mathbb{P}^2$ with multiplicity $m_{P}$ on an irreducible plane curve $C$ in $\mathbb{P}^2$ of degree $d$, we will show later that the extension $[k(C):K_{P}]$ has degree $d-m_{P}$. Depending on the degree of the extension, we divided the answers to the preceding two questions. Consequently, our first main result is the following theorem.
\begin{mytheorem}\label{A1}
Let  $P\in \mathbb{P}^2$, let $C\subset \mathbb{P}^2$ be an irreducible curve. If the extension $k(C)/K_{P}$ is Galois of degree  at most $3$, then $G_{P}$ always extends to a subgroup of $ \mathrm{Jonq}_{P}\subseteq \mathrm{Bir}(\mathbb{P}^2)$.
\end{mytheorem}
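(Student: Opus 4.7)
The plan is to translate the extension question into an explicit problem about Möbius transformations over $K_P$ and then dispatch the three small-degree cases one by one.

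\textbf{Reduction to $\mathrm{PGL}_2(K_P)$.}
First I would choose coordinates so that $P = [0:1:0]$; in the affine chart $z = 1$ the projection $\pi_P$ becomes $(x,y) \mapsto x$, so $K_P = k(x)$ and $k(\mathbb{P}^2) = K_P(y)$. The subgroup of $\mathrm{Jonq}_P$ acting as the identity on the base of the pencil corresponds, under $\varphi \mapsto \varphi^*$, to $\mathrm{PGL}_2(K_P)$ acting by
\[
y \;\longmapsto\; \frac{a(x)\,y + b(x)}{c(x)\,y + d(x)}.
\]
Writing $C = V(F)$ with $F \in k[x,y]$ irreducible of $y$-degree $d - m_P$, the polynomial $f(T) := F(x,T) \in K_P[T]$ is irreducible and $k(C) = K_P[T]/(f)$. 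Under this dictionary, extending $\sigma \in G_P$ to $\mathrm{Jonq}_P$ is the same as producing $\mu \in \mathrm{PGL}_2(K_P)$ that permutes the roots of $f$ in $k(C)$ exactly as $\sigma$ does.

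\textbf{Degrees $1$ and $2$.} The case $d - m_P = 1$ is trivial. For $d - m_P = 2$, denote the two roots by $r_1, r_2 \in k(C)$; since $r_1 + r_2 \in K_P$ (it is minus the $T$-coefficient of $f$), the affine involution $\mu(T) = (r_1 + r_2) - T$ lies in $\mathrm{PGL}_2(K_P)$ and exchanges $r_1$ and $r_2$, realising the nontrivial element of $G_P$.

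\textbf{Degree $3$, Galois.} Now $G_P$ is cyclic of order $3$; fix a generator $\sigma$ cycling the roots as $r_1 \to r_2 \to r_3 \to r_1$. By sharp $3$-transitivity of $\mathrm{PGL}_2$, there is a unique $\mu \in \mathrm{PGL}_2(k(C))$ with $\mu(r_i) = r_{i+1}$ (indices mod $3$). To descend $\mu$ to $\mathrm{PGL}_2(K_P)$ I would apply Galois descent: letting $\mu^\sigma$ denote the Möbius transformation obtained by applying $\sigma$ to the coefficients of $\mu$, one has $\mu^\sigma = \sigma \circ \mu \circ \sigma^{-1}$, and hence
\[
\mu^\sigma(r_j) \;=\; \sigma\bigl(\mu(r_{j-1})\bigr) \;=\; \sigma(r_j) \;=\; r_{j+1}.
\]
So $\mu^\sigma$ has the same defining property as $\mu$; uniqueness forces $\mu^\sigma = \mu$, and since $\sigma$ generates $G_P$, this means $\mu \in \mathrm{PGL}_2(K_P)$. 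The cyclic subgroup $\langle \mu \rangle \subset \mathrm{Jonq}_P$ is then the sought extension of $G_P$.

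\textbf{Main obstacle.} The substantive step is the degree-$3$ case, and its engine is the sharp $3$-transitivity of $\mathrm{PGL}_2$ combined with Galois descent: it is precisely because any cyclic permutation of three distinct points is realised by a unique Möbius transformation that the descent argument closes. This is also exactly where the method breaks in higher degree, a generic $4$-cycle on four points is not realised by any element of $\mathrm{PGL}_2$, and the later sections of the paper exploit this failure to produce counterexamples, confirming that the bound ``degree $\leq 3$'' is sharp.
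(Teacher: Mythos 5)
Your proof is correct, and it follows the same overall skeleton as the paper's: choose coordinates so that $\pi_P$ is projection onto one affine coordinate, translate the problem into realizing the Galois action on the fibre coordinate by an element of $\mathrm{PGL}_2(K_P)$, and treat the degrees $1$, $2$, $3$ separately (this is Theorem \ref{thm:cases}). Where you genuinely diverge is in the crucial degree-$3$ step. The paper's Lemma \ref{Lemma:Galois} writes the order-$3$ automorphism in the basis $\{1,x,x^2\}$ of $k(C)$ over $K_P$ as $x\mapsto \nu_2x^2+\nu_1x+\nu_0$ and produces explicit formulas for $\alpha,\beta,\gamma,\delta$ in terms of the $\nu_i$ and the coefficients of the cubic; you instead invoke sharp $3$-transitivity of $\mathrm{PGL}_2$ over $k(C)$ together with Galois descent. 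Your route is cleaner and makes transparent why the bound $3$ is the right one (a $4$-cycle on four points is in general not realized by any M\"obius transformation, which is exactly the failure exploited in Lemma \ref{toBir}), while the paper's computation has the advantage of yielding the de Jonqui\`eres map in closed form. One step you should make explicit: the equality $\mu^\sigma=\mu$ holds in $\mathrm{PGL}_2(k(C))$, so a representative matrix $M$ only satisfies $M^\sigma=\lambda_\sigma M$ for some $\lambda_\sigma\in k(C)^\times$, and the coefficients of $\mu$ need not themselves be $\sigma$-invariant; to conclude $\mu\in\mathrm{PGL}_2(K_P)$ you need $H^1\bigl(G_P,k(C)^\times\bigr)=1$ (Hilbert 90 applied to the $1$-cocycle $\sigma\mapsto\lambda_\sigma$) in order to rescale $M$ into $\mathrm{GL}_2(K_P)$. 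This is routine for a cyclic extension, but it is the actual content of the descent step and deserves a sentence.
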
 
Theorem \ref{A1} resulted from Theorem \ref{thm:cases}, which provides more information on the Galois extensions of degree at most $3$ and the related Galois Groups at a point $P$. This encourages us to study higher-degree Galois extensions and determine if their Galois groups $G_P$ can always be extended to $\mathrm{Bir}(\mathbb{P}^2)$ as well as to the group of de Jonquières map with respect to $P$. The following theorem is our second argument. 
\begin{mytheorem}\label{A2}
    Let $k$ be a field of characteristic $char(k)\neq2$ containing a primitive fourth root of unity, and let $C$ be the irreducible curve defined by the equation 
\begin{equation}
X^{4}-4 Z Y X^{2}-Z Y^{3}+2 Z^{2} Y^{2}-Y Z^{3} =0,
\end{equation}
then the point $P=[1:0:0]$ is an outer Galois point of $C$ and the extension induced by the projection $\pi_P: C\dashrightarrow \mathbb{P}^1$  is Galois of degree $4$. The group $G_{P}$ extends to $\mathrm{Bir}(\mathbb{P}^2)$ but not to $\mathrm{Jonq}_{P}$.
\end{mytheorem}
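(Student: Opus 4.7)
The plan is to verify the four assertions — that $P$ is outer, that the extension is Galois of degree $4$, that $G_P$ extends to $\mathrm{Bir}(\mathbb{P}^2)$, and that it does not extend to $\mathrm{Jonq}_P$ — by combining explicit computation in $k(C)$ with a quadratic Cremona transformation based at the singular points of $C$.

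First, substituting $P = [1:0:0]$ into the defining equation yields $1 \neq 0$, so $P$ is outer and $m_P = 0$; the degree formula recalled in the introduction then gives $[k(C):K_P] = 4$. In the chart $Z = 1$ the curve reads $f(x,y) = x^4 - 4yx^2 - y(y-1)^2 = 0$, which I regard as a quadratic in $x^2$ over $k(y)$. Its discriminant factors as $4y(y+1)^2$, so $x^2 = 2y \pm (y+1)\sqrt{y}$, and hence $\sqrt{y} = (x^2 - 2y)/(y+1) \in k(C)$. Since the two values of $x^2$ have product $-y(y-1)^2$, all four roots of $f$ lie in $k(C)$, so $k(C)/K_P$ is Galois. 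Defining $\tau$ by $\tau(\sqrt{y}) = -\sqrt{y}$ and $\tau(x) = i(y-1)\sqrt{y}/x$, one checks that $\tau^{2}(x) = -x$ and $\tau^{2}(\sqrt{y}) = \sqrt{y}$, so $\tau$ has order $4$ and $G_P = \langle \tau \rangle \cong \mathbb{Z}/4\mathbb{Z}$.

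For the negative half, any de Jonqui\`eres extension of $\tau$ must fix $y$ and take the form $(x,y) \mapsto ((A(y)x + B(y))/(C(y)x + D(y)),\, y)$ with $AD - BC \neq 0$. Using $\sqrt{y} = (x^2 - 2y)/(y+1)$ to rewrite $\tau(x) = i(y-1)(x^2-2y)/(x(y+1))$ and cross-multiplying gives the polynomial identity
\[
(y+1)(A x^2 + B x) = i(y-1)(C x^3 + D x^2 - 2y C x - 2y D)
\]
in $k(y)[x]$. Both sides have $x$-degree strictly below $\deg_x f = 4$, so the identity must hold in $k(y)[x]$ without reduction modulo $f$; matching coefficients of $1, x, x^2, x^3$ then forces $A = B = C = D = 0$, a contradiction. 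Hence $\tau$ does not extend to $\mathrm{Jonq}_P$.

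For the positive half, substituting $\sqrt{y} = v^{2}$ in $x^{2} = 2y + (y+1)\sqrt{y}$ rationalises the cover: the map $v \mapsto [v^{3}+v : v^{4} : 1]$ parametrises $C$, and $\tau$ acts on the normalisation $\mathbb{P}^{1}$ as $v \mapsto -iv$, of order $4$. A singular-locus computation identifies the three nodes of $C$ as $[0:1:1]$ and $[\pm i\sqrt{2}:-1:1]$ (passing to $\bar k$ if required); these three points are non-collinear, so the quadratic Cremona transformation $\rho$ centred at them is well-defined. Since each node has multiplicity $2$ on $C$, the image $\rho(C)$ is an irreducible curve of degree $2\cdot 4 - 2 - 2 - 2 = 2$, hence a smooth conic $C'$. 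Transporting $\tau$ across the birational isomorphism $\rho|_C : C \dashrightarrow C'$ yields a regular order-$4$ automorphism $\tau'$ of $C'$, and since the stabiliser of a smooth conic in $\mathrm{PGL}_3$ maps isomorphically onto $\mathrm{Aut}(C')$, $\tau'$ lifts uniquely to an order-$4$ element $\tilde\tau' \in \mathrm{PGL}_3$. Then $\tilde\tau := \rho^{-1}\, \tilde\tau'\, \rho \in \mathrm{Bir}(\mathbb{P}^2)$ has order $4$ and restricts to $\tau$ on $C$, so $\langle \tilde\tau \rangle$ realises $G_P$ as a subgroup of $\mathrm{Bir}(\mathbb{P}^2)$. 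The main obstacle is precisely this last step: one must locate the smooth-conic birational model $C'$ on which the Galois action becomes projectively linear, and check that $\tau$ acts there as a regular automorphism rather than merely a birational self-map, which hinges on the fact that $C$ has geometric genus $0$.
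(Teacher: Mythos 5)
Your proof is correct and follows essentially the same route as the paper's: the same rational parametrisation $v\mapsto[v^{3}+v:v^{4}:1]$ with the Kummer cover $y=v^{4}$ producing the cyclic group of order $4$, the same coefficient-matching contradiction ruling out a de Jonquières extension, and the same quadratic transformation based at the three nodes $[0:1:1]$, $[\pm i\sqrt{2}:-1:1]$ sending $C$ to a smooth conic on which $G_{P}$ becomes linear. The only cosmetic differences are that you derive the parametrisation from the equation by solving the biquadratic over $k(y)$ and extracting $\sqrt{y}=(x^{2}-2y)/(y+1)\in k(C)$, and you run the Jonquières computation inside $k(y)[x]/(f)$ with a degree-below-$4$ argument rather than on the parametrisation, whereas the paper additionally invokes Coolidge's theorem for a softer proof of extendability to $\mathrm{Bir}(\mathbb{P}^2)$.
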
  
Theorem \ref{A2} (as a result of Lemma \ref{toBir}) provides a negative answer to Question \ref{Q2} for the Galois extension of degree at least $4$. Although the Galois Group $G_P$ can be extended to $\mathrm{Bir}(\mathbb{P}^2)$, this does not necessarily mean that it can be extended to $\mathrm{Jonq}_P$. We also get the result listed below.
\begin{mytheorem}\label{A3}
Let $k$ be a field with $char(k)\neq5$ that contains a primitive fifth root of unity, and let $\phi:\mathbb{P}^1 \rightarrow \mathbb{P}^2$ given by \[\phi:[u:v]\mapsto[u v^6-u^7:u^5(u^2+v^2): v^5(u^2+v^2)].\] We define $C:=\overline{\phi(\mathbb{P}^1)}$ which is an irreducible curve of $\mathbb{P}^2$, then the point $P=[1:0:0]$ is an inner Galois point of $C$ and the extension induced by the projection $\pi_P: C\dashrightarrow \mathbb{P}^1$  is Galois of degree $5$. Moreover, the identity is the only element of the Galois group that extends to $\mathrm{Bir}(\mathbb{P}^2)$.
\end{mytheorem}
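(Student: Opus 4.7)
The plan is to proceed in three steps.

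\textbf{Step 1 (identifying the Galois extension).} First I would verify that $\phi$ is given by homogeneous polynomials of degree $7$ with no common factor and is generically injective, so that $C=\overline{\phi(\mathbb P^1)}$ is irreducible of degree $7$. In the affine parameter $t=v/u$, the values $\phi(\pm i)=[-2:0:0]=P$ show that $P\in C$, and a local expansion of $\phi$ around $t=\pm i$ yields the two distinct tangent lines $Y\pm iZ=0$, so $P$ is an ordinary node of $C$ with $m_P=2$. The projection from $P$ reads
\[
\pi_P\circ\phi(t)=[1+t^2:t^5(1+t^2)]=[1:t^5],
\]
which identifies $k(C)/K_P$ with $k(t)/k(t^5)$. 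This is Galois of degree $5=\deg C-m_P$ since $\zeta_5\in k$, with cyclic Galois group $G_P=\langle\sigma\rangle$, where $\sigma:t\mapsto\zeta_5 t$.

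\textbf{Step 2 (geometry of $\sigma$).} I then analyse the action of $\sigma$ on the normalization $\tilde C\simeq\mathbb P^1$. It has exactly two fixed points $t=0$ and $t=\infty$, whose $\phi$-images are $R_0=[-1:1:0]$ and $R_\infty=[0:0:1]$ on $C$. A local computation shows that $R_0$ is a cuspidal singularity of $C$ of type $v^2=u^5$, while $R_\infty$ is a smooth point at which $C$ has a flex of contact order $5$ with its tangent line $Y=0$. The crucial observation is that $\sigma$ does \emph{not} fix the preimages $\pm i$ of the node $P$: it sends them to $\pm\zeta_5 i$, whose images $Q_\pm:=\phi(\pm\zeta_5 i)$ are two \emph{distinct} smooth points of $C$. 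Consequently, viewed as a birational self-map of $C$, $\sigma$ is not regular at $P$; it separates the two branches of the node.

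\textbf{Step 3 (non-extension).} Suppose $\Phi\in\mathrm{Bir}(\mathbb P^2)$ satisfies $\Phi|_C=\sigma$. The non-regularity of $\sigma$ at $P$ immediately rules out $\Phi\in\mathrm{PGL}_3$, since a projective automorphism would send the single point $P$ to a single point rather than to the pair $\{Q_+,Q_-\}$. Since $\Phi^5|_C=\mathrm{id}$, I would first argue that the subgroup of $\mathrm{Bir}(\mathbb P^2)$ fixing $C$ pointwise is trivial---this relies on the specific singular configuration of $C$ (one node, one $A_4$ cusp, one flex of order $5$)---so that $\Phi$ has exact order $5$ in $\mathrm{Bir}(\mathbb P^2)$. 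I would then invoke the classification of elements of order $5$ in $\mathrm{Bir}(\mathbb P^2)$ (de~Fernex, Blanc): up to conjugation $\Phi$ is either linear, a de~Jonqui\`eres element with respect to some point $P'$, or an automorphism of a del~Pezzo surface of degree $1$ or $5$. The linear case is excluded above; the del~Pezzo cases are ruled out by comparing the anticanonical and conic-bundle invariants of such surfaces with the singularity data of $C$.

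The main obstacle is the de~Jonqui\`eres case for $P'\neq P$: there one must show that no pencil of rational curves through such a $P'$ can simultaneously be preserved by $\Phi$ and cut $C$ in $\sigma$-invariant divisors. The expected argument exploits the incompatibility between the required orbit of size $5$ on a generic pencil-fibre under $\sigma$ and the unique two-point fixed set $\{R_0,R_\infty\}$ on $\tilde C$, together with the separation of the two branches at $P$ from Step~2, yielding a contradiction and completing the proof that only the identity of $G_P$ extends to $\mathrm{Bir}(\mathbb P^2)$.
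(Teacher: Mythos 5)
Your Steps 1 and 2 are sound and agree with the paper: the identification $k(C)/K_P\cong k(t)/k(t^5)$ via $\pi_P\circ\phi=[u^5:v^5]$, the degree count $\deg C=7$ with $m_P=2$, and the observation that $\sigma$ separates the two branches of the node at $P$ (which does correctly rule out a \emph{linear} extension; the paper instead excludes the linear case by an explicit matrix computation from $g\circ\phi=\phi\circ\sigma$). The problem is Step 3, which is a programme rather than a proof, and it has two genuine gaps. First, to invoke the classification of order-$5$ elements of $\mathrm{Bir}(\mathbb{P}^2)$ you need $\Phi$ to have finite order, and for that you assert that the subgroup of $\mathrm{Bir}(\mathbb{P}^2)$ fixing $C$ pointwise is trivial. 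For a \emph{rational} curve this is false in general --- the inertia group of a rational curve is typically enormous (already for a line it contains all maps $(x,y)\mapsto(x,\,y f(x))$) --- and you give no argument for this particular $C$; a priori $\Phi^5$ could be a nontrivial element of infinite order and the classification would not apply. Second, even granting finite order, you explicitly leave open the de Jonquières case for $P'\neq P$ and the del Pezzo cases, offering only an ``expected argument''; these are precisely the hard cases and nothing in the sketch closes them. (There is also the point that the classification you cite is established over algebraically closed fields of characteristic $0$, whereas the statement allows more general $k$.)

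The paper bypasses all of this with one elementary lemma (Lemma \ref{multip}): if every point of $C$ has multiplicity $<\deg(C)/3$, then \emph{any} birational self-map of $\mathbb{P}^2$ preserving $C$ is linear; the proof is a short Noether-inequality computation, since $\deg(C)(d-1)=\sum m_i\, m_{P_i}(C)<\tfrac{1}{3}\deg(C)\sum m_i$ contradicts $\sum m_i=3(d-1)$ for $d>1$. The bulk of the paper's argument is then the verification that $C$ (of degree $7$) has no point of multiplicity $\geq 3$, carried out by showing that the pullbacks under $\phi$ of two lines through a hypothetical triple point cannot share a common factor of degree $3$. Once every candidate extension is known to be linear, your branch-separation argument (or the paper's matrix computation) finishes. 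To salvage your approach you should replace the appeal to the order-$5$ classification by this multiplicity bound; as written, the proof is incomplete.
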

Due to theorem \ref{A3}, we cannot claim that $G_P$ can always be extended to $\mathrm{Bir}(\mathbb{P}^2)$ for every Galois extension of degree at least $4$, see Lemma \ref{not-to-Bir}.
\section*{Acknowledgement}
I would like to thank my PhD advisor Jérémy Blanc for suggesting the question and for interesting discussions during the preparation of this text. I would also like to thank the Department of Mathematics and Computer Science at Basel for the hospitality.
\section{Preliminaries}
The concept of Galois points for irreducible plane curve $C\subset\mathbb{P}^2$ was introduced by \cite{miura2000field}, \cite{Yoshihara2009rational}, \cite{Fukasawa2009galois}. In order to study the extension of an element in $G_P$ to $\mathrm{Bir}(\mathbb{P}^2)$, we need the following definition.
\begin{definition}
Let $K_P = \pi^*_P(k(\mathbb{P}^1))$, the point P is called a Galois point for $C$ if the field extension $k(C)/K_P$ is Galois. Moreover, if $P\in C$ $[$resp. $P\notin C]$, then we call $P$ an inner [resp. outer] Galois point. If $P$ is Galois, we write $G_{P}=Gal(k(C)/K_P)$ and call it the Galois group at $P$.
\end{definition}
\begin{lemma}\label{lemm:deg}
 The field extension induced by $\pi_P$, $i.e$., $\pi^*_P: k(\mathbb{P}^1) \hookrightarrow k(C)$, $i.e.$ the extension  $k(C) /K_P$ has degree $d-m_P$, where $m_P$ is the multiplicity of $C$ at $P$, and $d$ is the degree of $C$.
\end{lemma}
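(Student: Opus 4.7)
The plan is to translate the algebraic statement into the geometric claim that $\pi_P|_C\colon C\dashrightarrow\mathbb{P}^1$ is dominant of degree $d-m_P$, and then to compute this degree by analysing a generic fibre via Bézout's theorem. First I would note that $\pi_P|_C$ is dominant except in the degenerate case when $C$ is a line through $P$, where $d=m_P=1$ and the formula $d-m_P=0$ merely records that the map is constant; I exclude this case from the outset. Under the standard dictionary between dominant rational maps of irreducible varieties of the same dimension and the induced field extensions, $[k(C):K_P]=\deg(\pi_P|_C)$.

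Next I would describe the fibres geometrically. A generic point $q\in\mathbb{P}^1$ corresponds to a line $\ell_q\subset\mathbb{P}^2$ through $P$, and the fibre divisor of $\pi_P|_C$ over $q$ is precisely the part of the intersection cycle $C\cdot\ell_q$ supported away from $P$; equivalently, on the normalisation of $C$ the preimages of $P$ map to the tangent directions of the branches of $C$ at $P$, hence avoid the generic $q$. By Bézout's theorem, $C\cdot\ell_q$ has total degree $d$, so the whole computation reduces to determining $i_P(C,\ell_q)$ for a generic $\ell_q$.

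The key input is the classical fact that $i_P(C,\ell)=m_P$ for a generic line $\ell$ through $P$, the exceptional locus being the finite set of tangent lines to branches of $C$ at $P$ (vacuous when $P\notin C$, where $m_P=0$). Subtracting this from the Bézout total leaves $d-m_P$ as the sum of the intersection multiplicities off $P$; these are exactly the ramification indices of $\pi_P|_C$ over $q$, and their sum equals $\deg(\pi_P|_C)$ by the very definition of the degree of a finite map between curves. This yields $\deg(\pi_P|_C)=d-m_P$, as required; note that this argument is insensitive to possible inseparability in positive characteristic, since Bézout counts intersections with their scheme-theoretic multiplicities.

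The only nontrivial ingredient, and the main obstacle, is the classical identity $i_P(C,\ell)=m_P$ for a generic line through $P$; this is standard in the theory of plane curve singularities and can be read directly from the leading form of a local equation of $C$ at $P$, so I would simply cite or briefly recall it rather than reprove it from scratch.
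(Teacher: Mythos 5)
Your argument is correct, but it takes a genuinely different route from the paper. The paper works purely algebraically: after normalising $P=[1:0:0]$, it writes the defining form as $F=F_{m_P}(Y,Z)X^{d-m_P}+\dots+F_d(Y,Z)$ with $F_{m_P}\neq 0$, passes to the chart $Z=1$, and identifies $k(C)/K_P$ with $\bigl(k(Y)[X]/(f)\bigr)/k(Y)$, so the degree is simply $\deg_X f=d-m_P$, the degree of the minimal polynomial of $x$ over $k(Y)$. You instead identify $[k(C):K_P]$ with the degree of the dominant map $\pi_P|_C$ and compute a generic fibre: Bézout gives total intersection $d$ with a line through $P$, the classical identity $i_P(C,\ell)=m_P$ for a generic such line removes the contribution at $P$, and the residual $d-m_P$ is the fibre degree. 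Your route buys geometric transparency (the fibres literally are the residual intersections of lines through $P$) and, as you note, is indifferent to inseparability since Bézout counts scheme-theoretically; its cost is that it invokes heavier machinery (intersection multiplicities, normalisation, and the generic-line identity), and the one ingredient you defer to the classics, $i_P(C,\ell)=m_P$ for generic $\ell$ through $P$, is itself usually established by exactly the leading-form computation the paper performs directly -- so in effect you have repackaged the paper's core calculation as a citation. Both proofs quietly exclude the degenerate case of a line through $P$, where $\pi_P|_C$ is not dominant; you at least make this exclusion explicit, which is a small improvement in rigour over the paper's implicit assumption that $d-m_P\geq 1$.
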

\begin{proof}
Let $F(X, Y, Z) = 0$ be the defining equation of $C$ of degree $d$. By changing coordinates, we may fix the point $P$ to be the point $[1:0:0]\in \mathbb{P}^2$ and choose that $C$ is not the line $Z=0$. Since $P$ has multiplicity $m_P$, so we can write the equation of the curve in the following form.
\begin{equation}
    F(X,Y,Z)=F_{m_p}(Y,Z)X^{d-m_p}+......+F_{d-1}(Y,Z)X+F_{d}(Y,Z),
\end{equation}
where $F_i(Y,Z)$ is a homogeneous polynomial of $Y$ and $Z$ of degree $i~(m_P\leq i\leq d)$ and $F_{m_P}(Y,Z)\neq0$. Since $F(X,Y,Z)$ is irreducible in $k[X,Y,Z]$ and not a multiple of $Z$, then $f=F(X,Y,1)\in k[X,Y]$ is also irreducible in $k[X,Y]$. We take affine chart $Z=1$, so we can see $f$ as an irreducible polynomial in $\tilde{k}[X]$ with $deg_{X}(f)=d-m_P$ where $\Tilde{k}=k(Y)$. And hence the extension $k(C)/K_{P}$ is isomorphic to $(k(Y)[X]/(f))/(k(Y))$ and thus it has the same degree equal to the degree of the irreducible polynomial $f\in \tilde{k}[X]$, so we get $[k(C):\tilde{k}]=deg_{X}(f)=d-m_P$.
\end{proof} 
\begin{definition}
Let $E/F$ be a field extension. An element $\alpha\in E$ is said to be algebraic over $F$ if there is a non-constant polynomial $f(X) \in F[X]$ such that $f(\alpha)=0$; if $\alpha$ is not algebraic over $F$, it is said to be transcendental over $F$, and if every element of $E$ is algebraic over $F$, then $E$ is said to be an algebraic extension of $F$.
\end{definition}
\begin{definition}
Let $E/F$ be a field extension, and $\alpha\in E$ be an algebraic element. The monic polynomial of the least degree among all polynomials in the set $\{f(x)\in F[x];~f(\alpha)=0\}$ is the minimal polynomial of $\alpha$.
\end{definition}
\begin{definition}
Let $E/F$ be an algebraic extension, an element $\alpha\in E$ is said to be separable if its minimal polynomial $f(X) \in F[X]$ does not have multiple roots in an algebraic closure of $E$.  If every element of $E$ is separable, then $E$ is said to be separable.
\end{definition}
\begin{definition}
A rational map of $\mathbb{P}^n_{k}$ is a map of the type  $[z_0: z_1: ...: z_n]\dashrightarrow [\phi_{0}(z_0, z_1,...,z_n):\phi_{1}(z_0, z_1,...,z_n):....:\phi_{n}(z_0, z_1,...,z_n)]$ where the $\phi_i’s\in k[z_1,..,z_n]$ denote homogeneous polynomials of the same positive degree, we may always assume these to be without common factor.
\end{definition}
\begin{definition}
A birational map $\phi$ of $\mathbb{P}^n_{\mathbb{C}}$ is a rational map of $\mathbb{P}^n_{\mathbb{C}}$ such that there exists a rational map $\psi$ of $\mathbb{P}^n_{\mathbb{C}}$ with the following property $\phi\circ\psi=\psi\circ\phi=id,$ where $id: [z_0 : z_1 : ... : z_n]\stackrel{}{\dashrightarrow}[z_0 : z_1 :...: z_n]$. The degree of $\phi\in \mathrm{Bir}(\mathbb{P}^n_{\mathbb{C}})$ is the degree of the $\phi_i$s, when taken without common factor.
\end{definition}
\begin{theorem}\cite[Theorem 4.4]{MR0463157}
For any two varieties $X$ and $Y$, there is a bijection between the set of dominant rational maps $\varphi:X \dashrightarrow Y$, and the set of field homomorphisms $\varphi^{\star}:k(Y) \rightarrow k(X)$. This bijection is given by $\varphi\mapsto\varphi^{\star}$.
\[\begin{tikzcd}[column sep=huge,row sep=huge]
X\arrow[r,dashed,"\varphi",->] \arrow[dr,swap,"\varphi^{\star}(a)=a\circ \varphi",dashed,->] &
  Y \arrow[d,"a",dashed,->] \\
& k 
\end{tikzcd}\]\end{theorem}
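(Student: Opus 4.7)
The plan is to construct the bijection explicitly in both directions and verify mutual invertibility. For the forward map, suppose $\varphi: X \dashrightarrow Y$ is a dominant rational map, represented by a morphism on some nonempty open $U \subseteq X$. Given $a \in k(Y)$, represented by a regular function on some open $V \subseteq Y$, dominance guarantees that $\varphi^{-1}(V) \cap U$ is a nonempty open subset of $X$; on it, $a \circ \varphi$ is a regular function, and its class in $k(X)$ only depends on the class of $a$ in $k(Y)$ and on $\varphi$. This defines a $k$-algebra homomorphism $\varphi^{\star}: k(Y) \to k(X)$, which is automatically injective since $k(Y)$ is a field.

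For the inverse construction, let $\theta: k(Y) \to k(X)$ be a $k$-algebra homomorphism. I would choose an affine open $V \subseteq Y$ with coordinate ring $A = k[y_1, \dots, y_n]/I$, a finitely generated $k$-algebra naturally contained in $k(Y)$. Setting $f_i := \theta(y_i) \in k(X)$ produces an $n$-tuple of rational functions on $X$, each regular on some nonempty open; let $U \subseteq X$ be the nonempty open on which all of them are simultaneously regular. The tuple $(f_1, \dots, f_n)$ then defines a morphism $U \to \mathbb{A}^n$, and since $\theta$ is a $k$-algebra homomorphism, every generator of $I$ vanishes on the image, so the morphism factors through $V \subseteq \mathbb{A}^n$. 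This yields a rational map $\varphi_{\theta}: X \dashrightarrow Y$.

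To see $\varphi_{\theta}$ is dominant, I would argue that the ring map $A \hookrightarrow \mathcal{O}_X(U)$ induced by $\theta$ is injective (being the restriction of an injection of fields), so the corresponding morphism $U \to V$ has dense image, hence so does $\varphi_{\theta}$. Mutual inverseness is essentially tautological: starting with $\varphi$, the forward map produces $\varphi^{\star}$ sending each coordinate function $y_i$ on an affine chart to $y_i \circ \varphi$, and then reassembling these gives $\varphi$ back on the relevant open; conversely, starting with $\theta$, building $\varphi_{\theta}$ and then pulling back coordinates recovers $\theta$ on a set of generators of $k(Y)$ over $k$, hence on all of $k(Y)$.

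The main obstacle, and the only nontrivial point, is the well-definedness of the backward construction: one must check that the resulting rational map $\varphi_{\theta}$ does not depend on the choice of affine open $V \subseteq Y$ or on the choice of generators $y_1, \dots, y_n$ of $A$. I would handle this by observing that any two such choices can be compared on a common affine refinement; the ring homomorphisms induced by $\theta$ agree on the overlap because $\theta$ is a single map on $k(Y)$, and the morphisms to the affine charts therefore glue to the same rational map $X \dashrightarrow Y$.
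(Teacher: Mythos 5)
The paper does not prove this statement at all --- it is quoted verbatim from the cited reference (Hartshorne, \emph{Algebraic Geometry}, Theorem I.4.4) --- so there is no internal proof to compare against. Your argument is correct and is essentially the standard proof given in that reference: pull back functions using dominance for one direction, and reconstruct the rational map from the images of affine coordinate generators for the other, with the well-definedness and dominance checks handled exactly as one should.
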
 
\begin{corollary}\label{cor_Bir}For each variety $X$, we have a group isomorphism $\mathrm{Bir}(X) \stackrel{\simeq }{\rightarrow}\mathrm{Aut}_{k}(k(X))$ which sends $\varphi$ to $\varphi^{*}$.
\end{corollary}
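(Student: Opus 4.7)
The plan is to deduce the corollary directly from the bijection in the preceding theorem, specialized to the case $Y=X$. That theorem already supplies a set-theoretic bijection
\[
\{\varphi:X\dashrightarrow X \text{ dominant}\} \;\longleftrightarrow\; \mathrm{Hom}_{k}(k(X),k(X)),
\qquad \varphi\longmapsto \varphi^{*},
\]
so only two things remain to be checked: that this bijection restricts to a bijection between $\mathrm{Bir}(X)$ and $\mathrm{Aut}_{k}(k(X))$, and that it is compatible with the group laws.

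For the first point, the strategy is to transport inverses across the bijection. If $\varphi\in\mathrm{Bir}(X)$, then both $\varphi$ and $\varphi^{-1}$ are dominant rational self-maps, and the relations $\varphi\circ\varphi^{-1}=\mathrm{id}_X=\varphi^{-1}\circ\varphi$ imply, by functoriality of pullback on rational functions, that $\varphi^{*}$ admits a two-sided compositional inverse as a $k$-algebra homomorphism; hence $\varphi^{*}\in\mathrm{Aut}_{k}(k(X))$. Conversely, given $\sigma\in\mathrm{Aut}_{k}(k(X))$, the preceding theorem produces dominant rational maps $\varphi,\psi:X\dashrightarrow X$ with $\varphi^{*}=\sigma$ and $\psi^{*}=\sigma^{-1}$; the identity $\sigma\circ\sigma^{-1}=\mathrm{id}$ then forces $\psi=\varphi^{-1}$ on the rational-map side by injectivity of $\varphi\mapsto\varphi^{*}$, so that $\varphi\in\mathrm{Bir}(X)$.

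For the second point, one simply computes, for $\varphi,\psi\in\mathrm{Bir}(X)$ and $a\in k(X)$,
\[
(\varphi\circ\psi)^{*}(a)\;=\;a\circ(\varphi\circ\psi)\;=\;(a\circ\varphi)\circ\psi\;=\;\psi^{*}\bigl(\varphi^{*}(a)\bigr),
\]
which shows that $\varphi\mapsto\varphi^{*}$ is at least a bijective anti-homomorphism of groups, hence a group isomorphism onto $\mathrm{Aut}_{k}(k(X))$ (using, if one insists on reading the arrow as a covariant map, the obvious isomorphism $G\xrightarrow{\simeq}G^{\mathrm{op}}$ given by inversion).

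The step I expect to be the only real subtlety is the first one, namely producing the inverse rational map from a field automorphism without circular reasoning: one must not simply \emph{define} $\varphi^{-1}$ to be the rational map associated with $\sigma^{-1}$ and then claim birationality, but rather verify that this candidate actually composes to the identity on both sides; this is precisely where the injectivity statement in the preceding theorem is used. Apart from this, the argument is a routine functoriality check and requires no further input.
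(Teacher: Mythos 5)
Your argument is correct and is essentially the intended one: the paper gives no proof of this corollary at all, treating it as an immediate consequence of the cited bijection with dominant rational maps, and your write-up simply makes that deduction explicit (transporting inverses across the bijection in both directions, using injectivity of $\varphi\mapsto\varphi^{*}$ to identify the candidate inverse). Your observation that pullback is contravariant, so that $(\varphi\circ\psi)^{*}=\psi^{*}\circ\varphi^{*}$ and the map is a priori only an anti-isomorphism, is a genuine point of precision that the paper's statement glosses over; the repair you indicate (compose with inversion, or read the target as $\mathrm{Aut}_{k}(k(X))^{\mathrm{op}}$) is the standard one and does not affect how the corollary is used later.
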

\begin {lemma}\label{LemBir1}
For any field $k$, we have $\mathrm{Aut}_k(k(x))=\{x\mapsto(a x+b)/(c x +d);~a,b,c,d\in k, ~a d-b c\neq0\}$ and $\mathrm{Aut}_k(k(x))\cong \mathrm{Bir}(\mathbb{A}^1)\cong \mathrm{Bir}(\mathbb{P}^1)$.
\end{lemma}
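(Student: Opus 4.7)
The two isomorphisms are essentially free: both $\mathbb{A}^1$ and $\mathbb{P}^1$ have function field $k(x)$, so applying Corollary \ref{cor_Bir} to each gives $\mathrm{Bir}(\mathbb{A}^1)\cong\mathrm{Aut}_k(k(x))\cong\mathrm{Bir}(\mathbb{P}^1)$. The real content is the equality $\mathrm{Aut}_k(k(x))=\{x\mapsto(ax+b)/(cx+d) : ad-bc\neq0\}$, which I would split into the two obvious inclusions.

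For the inclusion $\supseteq$, I would simply check that any assignment $x\mapsto(ax+b)/(cx+d)$ with $ad-bc\neq0$ extends uniquely to a $k$-algebra endomorphism of $k(x)$ (the denominator is a nonzero element of $k(x)$, since $cx+d\neq 0$ as a polynomial, and $(ax+b)/(cx+d)$ is not in $k$ precisely because $ad-bc\neq0$), and that the map $x\mapsto(dx-b)/(-cx+a)$ gives a two-sided inverse by direct substitution; this is a short computation with matrices in $\mathrm{GL}_2(k)$.

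For the inclusion $\subseteq$, I would take $\sigma\in\mathrm{Aut}_k(k(x))$ and write $\sigma(x)=f(x)/g(x)$ with $f,g\in k[x]$ coprime. Since $\sigma$ is a $k$-automorphism we have $k(f/g)=k(\sigma(x))=k(x)$, so $[k(x):k(f/g)]=1$. The key input is the standard degree formula
\[
[k(x):k(f/g)]=\max(\deg f,\deg g),
\]
valid whenever $f/g\notin k$ and $\gcd(f,g)=1$: the element $x$ is a root of the polynomial $F(T):=f(T)-\tfrac{f(x)}{g(x)}g(T)\in k(f/g)[T]$, and after clearing denominators one checks irreducibility in $k[x][T]$ by Gauss's lemma together with the coprimality of $f$ and $g$. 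Forcing this degree to be $1$ gives $\max(\deg f,\deg g)=1$, so $f(x)=ax+b$ and $g(x)=cx+d$ for some $a,b,c,d\in k$; the condition $ad-bc\neq0$ then records exactly the fact that $f$ and $g$ are linearly independent, equivalently that $f/g\notin k$ so that $\sigma$ is not the zero map.

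The only step that is not entirely automatic is the degree formula for $[k(x):k(f/g)]$, and in particular the irreducibility of $F(T)$ over $k(f/g)$; everything else is a direct computation. Since this is a classical fact (for instance it is the engine of Lüroth's theorem), I would either invoke it as such or spend a couple of lines sketching the Gauss-lemma argument above.
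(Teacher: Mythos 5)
Your proposal is correct, and for the substantive inclusion $\mathrm{Aut}_k(k(x))\subseteq\{x\mapsto(ax+b)/(cx+d)\}$ it takes a genuinely different route from the paper. The paper works on the geometric side: it transports the problem to $\mathrm{Bir}(\mathbb{P}^1)$ via Corollary \ref{cor_Bir}, writes $f:[u:v]\mapsto[f_1:f_2]$ with $f_1,f_2$ coprime of degree $d_1$, and argues that composition with the inverse $g$ of degree $d_2$ produces a map of degree exactly $d_1d_2$ (a common factor of $f_1(g_1,g_2)$ and $f_2(g_1,g_2)$ would force a base point, impossible on $\mathbb{P}^1$), so $d_1d_2=\deg(\mathrm{id})=1$ and $f$ is linear. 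You instead stay entirely on the field-theoretic side, writing $\sigma(x)=f/g$ in lowest terms and invoking the Lüroth-type degree formula $[k(x):k(f/g)]=\max(\deg f,\deg g)$, proved by exhibiting $x$ as a root of $f(T)-\tfrac{f(x)}{g(x)}g(T)$ and checking irreducibility via Gauss's lemma and coprimality; surjectivity of $\sigma$ forces this degree to be $1$. Both arguments are sound. The paper's version is more geometric and keeps the "degree of a composition" reasoning visible (which is reused in spirit elsewhere), at the cost of the slightly delicate no-common-factor claim; yours reduces everything to a single classical algebraic lemma and avoids passing through $\mathbb{P}^1$ at all, which is arguably cleaner, though you should state the verification that the leading coefficient of $f(T)-\tfrac{f(x)}{g(x)}g(T)$ does not vanish when $\deg f=\deg g$ (it doesn't, precisely because $f/g\notin k$). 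Your closing remark that $ad-bc\neq0$ "records that $\sigma$ is not the zero map" is better phrased as: $ad-bc=0$ would force $\sigma(x)\in k$, contradicting $k(\sigma(x))=k(x)$.
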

\begin{proof} As $k(x)$ is the function field of $\mathbb{A}^1$ and $\mathbb{P}^1$, we have $\mathrm{Bir}(\mathbb{A}^1)\cong \mathrm{Bir}(\mathbb{P}^1) \cong \mathrm{Aut}_k(k(x))$ by Corollary \ref{cor_Bir}. Let $a,b,c,d\in k$ and $a d-b c\neq0$, then the map $f$ given by $x\mapsto(a x+b)/(c x +d)$ has $g$ as an inverse given by $x\mapsto(d x-b)/(-c x +a)$ since $f\circ g=g\circ f=id$, hence $f\in \mathrm{Aut}_k(k(x))$.\\
Conversely, suppose $f\in \mathrm{Bir}(\mathbb{P}^1)$, given by $[u:v]\mapsto[f_1(u,v):f_2(u,v)]$, where $f_1$ and $f_2$ have the same degree $d_1$ and no common factor. If $f$ has an inverse $g$ of degree $d_2$, then the composition $f \circ g$ is of degree $d_1d_2$, as there is no simplification; otherwise a common factor of $f_{1}(g_{1},g_{2})$ and $f_{2}(g_{1},g_{2})$ would give a point of $\mathbb{P}^1$ sent by $g$ onto $(0,0)$ or onto a point of $\mathbb{P}^1$ sent by $f$ onto $[0:0]$. But  $f\circ g=id$, so $1=deg(id)= deg(f \circ g)=d_1d_2$. Since there is no simplification in $\mathbb{P}^1$, so $d_1 , d_2<2$, and hence $deg(f)=1$. Hence $f$ is given by $[u:v]\mapsto[au+bv:cu+dv]$ where $ad-bc\neq 0$.
\end{proof}
\begin {lemma} \label{Lemma:aut}
For any field $k$, let $f:\mathbb{A}^n\dashrightarrow\mathbb{A}^n$ be a birational map, given by $(x_{1},..,x_{n})\mapsto (f_1,..,f_n)$, where $f_i\in k(x_1,..,x_n), i=1,..,n$, then $k(x_1,..,x_n)=k(f_1,..,f_n)$.
\end {lemma}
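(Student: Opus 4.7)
The plan is to reduce this to Corollary \ref{cor_Bir}, applied to the variety $X=\mathbb{A}^n$. Since $k(\mathbb{A}^n)=k(x_1,\dots,x_n)$, the corollary identifies the group $\mathrm{Bir}(\mathbb{A}^n)$ with the group $\mathrm{Aut}_k(k(x_1,\dots,x_n))$ of $k$-algebra automorphisms of the function field, via $\varphi\mapsto\varphi^{\star}$.

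Given the birational map $f$ of the lemma, I would first unwind what the pullback $f^{\star}$ does on the generators: interpreting each $x_i$ as the $i$-th coordinate function on the target copy of $\mathbb{A}^n$, the pullback is given by precomposition with $f$, so $f^{\star}(x_i)=x_i\circ f=f_i$ for every $i$. Because $f^{\star}$ is a $k$-algebra homomorphism, it follows that the image of $f^{\star}$ is exactly the subfield of $k(x_1,\dots,x_n)$ generated over $k$ by $f_1,\dots,f_n$, namely $k(f_1,\dots,f_n)$.

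Now the birationality of $f$ enters: by Corollary \ref{cor_Bir}, $f^{\star}$ is an element of $\mathrm{Aut}_k(k(x_1,\dots,x_n))$, in particular a surjective endomorphism. Hence its image equals the whole field:
\[
k(f_1,\dots,f_n)=f^{\star}\bigl(k(x_1,\dots,x_n)\bigr)=k(x_1,\dots,x_n),
\]
which is what we wanted. There is no real obstacle here; the only point that requires a line of explanation is the identification $f^{\star}(x_i)=f_i$, after which the statement is an immediate consequence of $f^{\star}$ being an automorphism rather than just an embedding.
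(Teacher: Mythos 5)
Your proof is correct and follows essentially the same route as the paper: both invoke Corollary \ref{cor_Bir} to see that $f^{\star}$ is a $k$-algebra automorphism of $k(x_1,\dots,x_n)$, hence surjective, so its image $k(f_1,\dots,f_n)$ is the whole field. Your extra sentence spelling out $f^{\star}(x_i)=f_i$ is a detail the paper leaves implicit, but it changes nothing about the argument.
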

\begin{proof}
Let $f$ be a birational map. By Corollary \ref{cor_Bir}, $f^{*}$ is an automorphism of the $k-$algebra $k(x_1,..,x_n)$, hence it is surjective, which yields $ k(x_1,..,x_n)=k(f_1,..,f_n)$.
\end{proof}
\begin {definition}
The group of birational maps of $\mathbb{P}^n_{\mathbb{C}}$ is denoted $\mathrm{Bir}(\mathbb{P}^n_{\mathbb{C}})$ and called the Cremona group. If $C \subset\mathbb{P}^2$ is an irreducible curve and $\phi\in \mathrm{Bir}(\mathbb{P}^2)$, we say that $\phi$ preserves $C$ (or leaves $C$ invariant) if $\phi$ restricts to a birational transformation of $C$. If this transformation is the identity, we say that $\phi$ fixes $C$.
\end{definition}
\begin {definition}Let $P\in\mathbb{P}^2$, we write \[\mathrm{Jonq}_{P}=\{\varphi\in \mathrm{Bir}(\mathbb{P}^2)|\exists~\alpha\in~\mathrm{Aut}(\mathbb{P}^1);\pi_P\circ\varphi=\alpha\circ\pi_P\}\] and call it the Jonquières group of $P$. A de Jonquières map with respect to $P$ is a birational map of $\mathbb{P}^2$ that belongs to $\mathrm{Jonq}_{P}$, this corresponding to ask that $\phi$ preserves a pencil of lines through the point $P$.
\end{definition}
\begin {lemma}
Let $P=[1:0:0]$, by taking an affine chart, a de Jonquières map with respect to $P$ is a special case of a Cremona transformation, of the form
\[\iota^{-1}\circ \mathrm{Jonq}_P\circ\iota=\{(x,y)\mapsto (\frac{a x+b}{c x +d}, \frac{r_1(x) y+ r_2(x)}{r_3(x) y+ r_4(x)})\}\]
and $ \iota:\mathbb{A}^2 \hookrightarrow \mathbb{P}^2,~ (x,y)  \longmapsto [x:y:1] $, where $a,b,c,d\in k$ with $a d-b c\neq0$  and $r_1(x),r_4(x),r_2(x), r_3(x)\in k(x)$ with $r_1(x) r_4(x)-r_2(x) r_3(x)\neq0$.
\end{lemma}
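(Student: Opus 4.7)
The plan is to translate $\varphi \in \mathrm{Jonq}_P$ into the associated field automorphism $\varphi^{*} \in \mathrm{Aut}_k(k(x,y))$ via Corollary~\ref{cor_Bir}, and then apply Lemma~\ref{LemBir1} twice: once over $k$ to identify one coordinate of $\varphi$, and once over the enlarged base field $k(x)$ to identify the other.

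First, writing $\iota^{-1} \circ \varphi \circ \iota$ as $(x,y) \mapsto (f(x,y), g(x,y))$ and noting that, in the affine chart $\iota$, the projection $\pi_P$ corresponds to projection onto one of the two coordinates (namely $x$, so that the form of the statement is reached), the Jonquières condition $\pi_P \circ \varphi = \alpha \circ \pi_P$ for some $\alpha \in \mathrm{Aut}(\mathbb{P}^1)$ forces $f(x,y) = \alpha(x)$, i.e.\ $f$ depends only on $x$. Lemma~\ref{LemBir1} then yields $\alpha(x) = \frac{ax+b}{cx+d}$ with $ad - bc \neq 0$.

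To determine $g = \varphi^{*}(y)$, I would introduce the auxiliary automorphism $\sigma \in \mathrm{Aut}_k(k(x,y))$ defined by $\sigma(x) = \frac{ax+b}{cx+d}$ and $\sigma(y) = y$, and set $\tau := \sigma^{-1} \circ \varphi^{*}$. Then $\tau(x) = x$, so $\tau \in \mathrm{Aut}_{k(x)}(k(x)(y))$. Applying Lemma~\ref{LemBir1} with $k(x)$ as the base field in place of $k$ (the proof of that lemma works verbatim over any field) produces $\tau(y) = \frac{\tilde{a}\, y + \tilde{b}}{\tilde{c}\, y + \tilde{d}}$ for some $\tilde{a}, \tilde{b}, \tilde{c}, \tilde{d} \in k(x)$ with $\tilde{a}\tilde{d} - \tilde{b}\tilde{c} \neq 0$. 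Composing with $\sigma$ and setting $r_i(x) \in k(x)$ to be $\sigma$ applied to each of $\tilde{a}, \tilde{b}, \tilde{c}, \tilde{d}$ gives $g(x,y) = \frac{r_1(x) y + r_2(x)}{r_3(x) y + r_4(x)}$ with $r_1 r_4 - r_2 r_3 \neq 0$, as required.

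For the reverse inclusion, every map of the stated form satisfies the Jonquières condition automatically (its first coordinate is a Möbius transformation in $x$ alone, so its composition with $\pi_P$ factors through $\pi_P$) and is birational, with an explicit inverse obtained by inverting the Möbius $x \mapsto \frac{ax+b}{cx+d}$ and then solving the second equation, which is linear in $y$, for $y$. The main point that requires attention is the factorization $\varphi^{*} = \sigma \circ \tau$: one must verify that $\sigma$ is a bona fide automorphism of $k(x,y)$, which follows from Lemma~\ref{LemBir1} applied to $k(x)/k$ together with the fact that $y$ remains transcendental over $\sigma(k(x)) = k(x)$, and that Lemma~\ref{LemBir1} is valid over the enlarged base $k(x)$.
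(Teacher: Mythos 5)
Your proposal is correct and follows essentially the same route as the paper: both arguments use the Jonquières condition to force the first coordinate to be a M\"obius function of $x$ alone (via Lemma~\ref{LemBir1}), then compose with the auxiliary map undoing that M\"obius transformation so that the residual map fixes $x$, and finally apply Lemma~\ref{LemBir1} again over the base field $k(x)$ to identify the second coordinate. The only cosmetic differences are that you work on the field-automorphism side while the paper works with the birational maps directly, and that you additionally spell out the (easy) reverse inclusion.
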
 
\begin{proof}
We have the following commutative diagram
\begin{equation*}
   \begin{array}{ccc}
     \hspace{5mm} \mathbb{A}^2 &\stackrel{\iota }{\hookrightarrow}&\mathbb{P}^2\\
     (x,y)  &\longmapsto &[x:y:1]\\
   \pi_x \dashdownarrow& &  \dashdownarrow \pi_P\\
 \hspace{5mm} \mathbb{A}^1 &\stackrel{\psi }{\hookrightarrow}&\mathbb{P}^1\\
     x  &\longmapsto &[x:1]
    \end{array}
\end{equation*}
which gives the equality,
\[\iota^{-1}\circ \mathrm{Jonq}_P\circ\iota=\{f\in \mathrm{Bir}(\mathbb{A}^2)|\exists \alpha\in \mathrm{Bir}(\mathbb{A}^1);\alpha\circ\pi_x=\pi_x\circ f\}.\]
Let $f\in \iota^{-1}\circ \mathrm{Jonq}_P\circ\iota$ given by $(x,y)\mapsto (f_1(x,y),f_2(x,y))$, then $\pi_x\circ f:(x,y)\mapsto f_1(x,y)$, but $\alpha\circ\pi_x=\alpha(x)$, it follows that $f_1(x,y)$ depends only on $x$, and is of this form $f_1(x,y)=(a x+b)/(c x +d)$ where $a,b,c,d\in \mathbb{C}$ and $a d-b c\neq0$ by Lemma \ref{LemBir1}. From Lemma \ref{Lemma:aut}, we have $k(x,y)= k((a x+b)/(c x +d),f_2(x,y))$, to describe the second component $f_2(x,y)$, let us define the birational map $\tau:(x,y)\mapsto ((d x-b)/(-c x +a),y)$, hence $\tau\circ f:(x,y)\mapsto(x,f_2(x,y))$ is a birational map since both $f$ and $\tau$ are birational, by Lemma \ref{Lemma:aut} again we have $k(x)(f_2(x,y))=k(x)(y)$. Apply Lemma \ref{LemBir1} over the field $k(x)$ we get $f_2(x,y)=(r_1(x) y+ r_2(x))/(r_3(x) y+ r_4(x))$.
\end{proof}
\begin{lemma}\label{Lem:Galois1}
Let $P,Q\in \mathbb{P}^2$ and $C,D\subset \mathbb{P}^2$ be two irreducible curves, if $\phi \in \mathrm{Bir}(\mathbb{P}^2)$ and $\phi|_{C}:C\dashrightarrow D$ is birational map, we assume that there exists $\theta\in \mathrm{Aut}(\mathbb{P}^1)$ such that $\pi_{Q}\circ \phi=\theta\circ\pi_P$, then $P$ is a Galois point of $C$ if and only if $Q$ is a Galois point of $D$. Moreover, if $P$ is Galois, an element of $G_{P}$ extends an element of of $\mathrm{Bir}(\mathbb{P}^2)$ (respectively $\mathrm{Jonq}_{P}$) if and only if its image in $G_{Q}$ extends an element of of $\mathrm{Bir}(\mathbb{P}^2)$ (respectively $\mathrm{Jonq}_{Q}$)
\begin{equation*}
   \begin{array}{ccc}
     \hspace{5mm} C &\stackrel{\phi }{\dashrightarrow}&D \\
   \pi_P \dashdownarrow& &  \dashdownarrow \pi_Q\\
    \hspace{5mm} \mathbb{P}^1 & \stackrel{\theta }{\longrightarrow}&\mathbb{P}^1
    \end{array}
\end{equation*}
\end{lemma}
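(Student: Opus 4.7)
The plan is to transport everything through $\phi$ both on the level of varieties and on the level of function fields. Since $\phi|_C : C \dashrightarrow D$ is birational, Corollary \ref{cor_Bir} gives a $k$-algebra isomorphism $(\phi|_C)^* : k(D) \to k(C)$. The hypothesis $\pi_Q \circ \phi = \theta \circ \pi_P$ translates into $(\phi|_C)^* \circ \pi_Q^* = \pi_P^* \circ \theta^*$, and since $\theta^*$ is an automorphism of $k(\mathbb{P}^1)$, this forces $(\phi|_C)^*(K_Q) = K_P$. Hence $(\phi|_C)^*$ restricts to an isomorphism between the field extensions $k(D)/K_Q$ and $k(C)/K_P$. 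In particular, one is Galois if and only if the other is, and conjugation by $(\phi|_C)^*$ yields an explicit group isomorphism $G_P \to G_Q$, $\sigma \mapsto ((\phi|_C)^*)^{-1}\, \sigma\, (\phi|_C)^*$, settling the first assertion.

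For the second assertion, suppose $\sigma \in G_P$ extends to some $\tilde{\sigma} \in \mathrm{Bir}(\mathbb{P}^2)$, meaning $\tilde{\sigma}|_C = \sigma$ as birational self-maps of $C$. I set $\tilde{\tau} := \phi \circ \tilde{\sigma} \circ \phi^{-1} \in \mathrm{Bir}(\mathbb{P}^2)$. Then $\tilde{\tau}|_D = \phi|_C \circ \sigma \circ (\phi|_C)^{-1}$, which under Corollary \ref{cor_Bir} is precisely the image of $\sigma$ under the isomorphism $G_P \to G_Q$ built above. Hence an extension to $\mathrm{Bir}(\mathbb{P}^2)$ on one side produces one on the other by conjugation by $\phi$. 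The reverse direction follows by running the same argument with $\phi^{-1}$ and $\theta^{-1}$, noting that $\pi_P \circ \phi^{-1} = \theta^{-1} \circ \pi_Q$, obtained by composing the hypothesis with $\phi^{-1}$ on the right.

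Finally, for the Jonquières refinement, I verify that conjugation by $\phi$ sends $\mathrm{Jonq}_P$ to $\mathrm{Jonq}_Q$. If $\tilde{\sigma} \in \mathrm{Jonq}_P$, there is some $\alpha \in \mathrm{Aut}(\mathbb{P}^1)$ with $\pi_P \circ \tilde{\sigma} = \alpha \circ \pi_P$, and using $\pi_P \circ \phi^{-1} = \theta^{-1} \circ \pi_Q$ I compute
\[
\pi_Q \circ \tilde{\tau} = \pi_Q \circ \phi \circ \tilde{\sigma} \circ \phi^{-1} = \theta \circ \pi_P \circ \tilde{\sigma} \circ \phi^{-1} = \theta \circ \alpha \circ \pi_P \circ \phi^{-1} = (\theta \alpha \theta^{-1}) \circ \pi_Q,
\]
so $\tilde{\tau} \in \mathrm{Jonq}_Q$, with $\theta \alpha \theta^{-1} \in \mathrm{Aut}(\mathbb{P}^1)$. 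The converse is symmetric, using $\phi^{-1}$ in place of $\phi$. The argument is essentially diagram chasing; the only points requiring mild care are the identification of the abstract Galois isomorphism $G_P \cong G_Q$ with the geometric conjugation $\sigma \mapsto \phi|_C \circ \sigma \circ (\phi|_C)^{-1}$, and the identity $\pi_P \circ \phi^{-1} = \theta^{-1} \circ \pi_Q$, which is precisely what makes conjugation by $\phi$ preserve the Jonquières property.
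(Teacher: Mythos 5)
Your proposal is correct and follows essentially the same route as the paper: pull the hypothesis $\pi_Q\circ\phi=\theta\circ\pi_P$ back to a commutative square of function fields, deduce that $(\phi|_C)^*$ identifies the extension $k(D)/K_Q$ with $k(C)/K_P$ (hence one is Galois iff the other is), and then observe that conjugation by $\phi$ sends extensions in $\mathrm{Bir}(\mathbb{P}^2)$ preserving $C$ to ones preserving $D$ and carries $\mathrm{Jonq}_P$ to $\mathrm{Jonq}_Q$. You merely spell out the conjugation computations that the paper states without detail.
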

\begin{proof}
Since $\phi|_C$ is birational map from $C$ to $D$, then $\phi^{*}|_C :k(D)\rightarrow k(C)$ is an isomorphism. Moreover, as $\pi_Q\circ\phi=\theta\circ\pi_P$, we have a commutative diagram 
\begin{equation*}
   \begin{array}{ccc}
     \hspace{5mm}  k(D)&\stackrel{\phi^{*}|_C} {\longrightarrow}& k(C) \\[.8mm]
   \pi^{*}_Q \hookuparrow& &  \hookuparrow \pi^{*}_P\\[.8mm]
    \hspace{5mm} k(\mathbb{P}^1) & \stackrel{\theta^{*} }{\longrightarrow}&k(\mathbb{P}^1)
    \end{array}
\end{equation*}
Hence, $k(D)/\pi_{Q}^{*}(k(\mathbb{P}^1))$ is Galois if and only if $k(D)/\pi_{P}^{*}(k(\mathbb{P}^1))$ is Galois. Moreover, $\phi$ conjugates $\mathrm{Jonq}_{P}$ to $\mathrm{Jonq}_{Q}$ and sends any element of $\mathrm{Bir}(\mathbb{P}^2)$ that preserves $C$ onto element of $\mathrm{Bir}(\mathbb{P}^2)$ that preserves $D$.
\end{proof}
\begin{example}
Let $P=Q\in\{[1:0:0],[0:1:0],[0:0:1]\}~\phi:[X:Y:Z] \mapsto [Y Z:X Z:X Y] $ and $\theta:[Y:Z] \mapsto [ Z:Y] $, let $C\subset\mathbb{P}^2$ be an irreducible curve not equal to $x=0,~y=0$ or $z=0$ and let $D=\phi(C)$, so we have the following diagram 
 \begin{equation*}
   \begin{array}{ccc}
     \hspace{5mm} C &\stackrel{\phi }{\dashrightarrow}& D \\
     \pi_P \dashdownarrow& &  \dashdownarrow \pi_P\\
    \hspace{5mm} \mathbb{P}^1 & \stackrel{\theta }{\longrightarrow}&\mathbb{P}^1
    \end{array}
\end{equation*}
which shows that, if $P$ is a Galois point for $C$, then $P$  becomes a Galois point for $D$, this is a particular case of Lemma \ref{Lem:Galois1} corresponding to \cite[Corollary $3$]{MR2427627}.
\end{example}
\section{Extensions of degree at most three }
\begin{lemma}\label{Lemma:Galois}
Let $k$ be a field and let $L=k[x]/(x^3+a_2 x^2+a_1 x+a_0)$ where $f=x^3+a_2 x^2+a_1 x+a_0$ is a separable irreducible polynomial in $k[x]$, then the field extension $L/K$ is Galois if and only if there exists an element $\sigma\in Gal(L/K)$ of order $3$ such that,
\[\sigma:x \mapsto \frac{\alpha x+\beta}{\gamma x+\delta}\]
where $\alpha, \beta, \gamma,\delta\in k$ with $\alpha\delta-\beta\gamma\neq0$.
\end{lemma}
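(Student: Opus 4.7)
The plan is to exploit the fact that $[L:k]=3$, so $L$ is a $k$-vector space of dimension $3$ and $|\mathrm{Aut}_k(L)|\leq 3$. Consequently $L/k$ is Galois precisely when $|\mathrm{Aut}_k(L)|=3$, which is equivalent to the existence of some $\sigma\in\mathrm{Aut}_k(L)$ of order $3$. The real content of the lemma is therefore to show that, whenever such a $\sigma$ exists, it must act on $x$ by a M\"obius transformation with coefficients in $k$; this is what I would tackle first.

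For the forward direction, I would pick a generator $\sigma$ of $\mathrm{Gal}(L/k)\cong\mathbb{Z}/3\mathbb{Z}$, set $y:=\sigma(x)$, and consider the four elements $1,x,y,xy\in L$. Since $\dim_k L=3$, they are $k$-linearly dependent, so there must exist $p,q,r,s\in k$, not all zero, with
\[p\,xy+q\,x+r\,y+s=0.\]
Rewriting this as $y(px+r)=-(qx+s)$ and using that $x\notin k$ (because $f$ is irreducible of degree $3$), a short case analysis rules out $px+r=0$: its vanishing would force $qx+s=0$ as well and hence $p=q=r=s=0$. Dividing then yields
\[\sigma(x)=\frac{-qx-s}{px+r},\]
the desired M\"obius form with $(\alpha,\beta,\gamma,\delta)=(-q,-s,p,r)$.

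The last step would be to verify the nondegeneracy $\alpha\delta-\beta\gamma=ps-qr\neq 0$: if it vanished, the rational function $(-qu-s)/(pu+r)$ would be constant in $u$, forcing $\sigma(x)\in k$ and contradicting the fact that $\sigma(x)$ is a root of the irreducible cubic $f$. The converse is then immediate: an order-$3$ element in $\mathrm{Aut}_k(L)$ forces $|\mathrm{Aut}_k(L)|\geq 3=[L:k]$, whence $L/k$ is Galois (the M\"obius shape plays no role in this direction). I expect the main obstacle to be the bookkeeping around the linear dependence: each potential degeneracy (a vanishing denominator or a constant M\"obius) must be traced back to the irreducibility of $f$, which is what prevents $x$ and $\sigma(x)$ from belonging to $k$.
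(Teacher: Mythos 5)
Your proof is correct, and it reaches the key point --- that a generator of the Galois group must act on $x$ by a M\"obius transformation --- by a slightly different route than the paper. The paper writes $\sigma(x)=\nu_2x^2+\nu_1x+\nu_0$ in the basis $1,x,x^2$ of $L$ over $k$ and then \emph{explicitly solves} for $\alpha,\beta,\gamma,\delta$ satisfying $(\gamma x+\delta)\sigma(x)=\alpha x+\beta$ in $L$, reducing $x^3$ via the relation $x^3=-a_2x^2-a_1x-a_0$ and killing the $x^2$-coefficient by the choice $\gamma=\nu_2$, $\delta=a_2\nu_2-\nu_1$; you instead observe that the four elements $1,x,\sigma(x),x\sigma(x)$ of the $3$-dimensional space $L$ must be linearly dependent and extract the M\"obius form from the resulting relation $p\,x\sigma(x)+qx+r\sigma(x)+s=0$. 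The two arguments are really the same linear-algebra fact seen from two sides: your dependence relation, expanded in the basis $1,x,x^2$, is exactly the linear system the paper solves. What your version buys is a clean existence argument with no bookkeeping (and indeed the paper's displayed formula for $\beta$ contains typographical slips --- it should read $\beta=a_2\nu_0\nu_2-a_0\nu_2^2-\nu_0\nu_1$); what the paper's version buys is an explicit formula for $(\alpha,\beta,\gamma,\delta)$ in terms of $(\nu_0,\nu_1,\nu_2)$ and the coefficients of $f$, which is convenient when one wants to compute the extension to a de Jonqui\`eres map concretely, as is done later in the text. Your treatment of the two degeneracies (ruling out $px+r=0$ using the linear independence of $1,x$, and ruling out $ps-qr=0$ because a constant M\"obius image would put $\sigma(x)$ in $k$, contradicting irreducibility of $f$) matches the paper's justification that $\alpha\delta-\beta\gamma\neq 0$, and your reduction of the Galois condition to the existence of an order-$3$ automorphism is the same as the paper's.
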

\begin{proof}
As $f$ is a separable irreducible polynomial of degree 3, the extension $L/K$ is separable of degree 3. It is then Galois if and only if there exists $\sigma\in \mathrm{Aut}(L /K)$ of order $3$, so it remains to prove that we can choose $\sigma$ with the right form. If $\sigma\in \mathrm{Aut}(L /K)$ where, $\sigma:x\mapsto \nu_2 x^2+\nu_1 x+\nu_0$ and $\nu_i\in k$ for $i=0,1,2$, so the question here is can we find $\{\alpha, \beta, \gamma,\delta\}\subset K$ with $\alpha\delta-\beta\gamma\neq0$, such that the following equality holds.
\begin{equation}
    \nu_2 x^2+\nu_1 x+\nu_0=\frac{\alpha x+\beta}{\gamma x+\delta}
\end{equation}
  we can find a solution 
\begin{align*}
    \alpha &= a_2 \nu_1 \nu_2 -a_1 \nu_2^2 + \nu_0 \nu_2 - \nu_1^2\\ 
    \beta &= a \nu_0 \nu_2 - a_0 \nu_2^2 - \nu_0\nu_2\\
     \delta &= a_2 \nu_2 - \nu_1\\
     \gamma&=\nu_2.
\end{align*}
We observe that $\alpha\delta-\beta\gamma\neq0$, otherwise we have $\sigma(x)\in K$ and this gives a contradiction as $x\notin K$. \end{proof}
\begin{theorem}\label{thm:cases} Let  $P\in \mathbb{P}^2$, let $C\subset \mathbb{P}^2$ be an irreducible curve of degree $d$: with multiplicity $m_{P}$ at $P$. Thus by Lemma \ref{lemm:deg}, $[k(C):K_{P}]=d-m_{P}$, hence we consider the following cases.
\begin{enumerate}
    \item If $d-m_P=1$, then $\pi_P:C\dashrightarrow \mathbb{P}^1$ is a birational.
    \item If $d-m_P=2$, $P$ is Galois if and only if the extension is separable, and if this holds, then the non-trivial element $\sigma\in G_P$ of order $2$ extends to a de Jonquières map with respect to $P$.
   \item If $d-m_P=3$, and $P$ is Galois, then there is a de Jonquières map with respect to $P$ extending the action. 
\end{enumerate}
\end{theorem}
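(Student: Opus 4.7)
The plan is to fix $P = [1:0:0]$ by a projective change of coordinates (this is harmless by Lemma \ref{Lem:Galois1}, taking $\phi \in \mathrm{PGL}_3 \subseteq \mathrm{Jonq}_{P'}$ appropriately) and to work in the affine chart $Z=1$. By the proof of Lemma \ref{lemm:deg}, the defining polynomial of $C$ has the form $f(X) = F_{m_P}(Y,1)X^{d-m_P} + \cdots + F_d(Y,1) \in \tilde k[X]$ with $\tilde k = k(Y)$, and $k(C) \cong \tilde k[X]/(f)$ as an extension of $K_P \cong \tilde k$. Each case then amounts to analyzing a polynomial of degree $d-m_P$ over $\tilde k$ and lifting the resulting field automorphism to an explicit map of the form $(x,y) \mapsto (\,\cdot\,, y)$, which belongs to $\mathrm{Jonq}_P$.

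For case (1), $[k(C):K_P]=1$ yields $\pi_P^*$ an isomorphism, and by Corollary \ref{cor_Bir} (applied to $\pi_P|_C : C \dashrightarrow \mathbb{P}^1$, which is dominant) this means $\pi_P|_C$ is birational.

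For case (2), write $f = aX^2 + bX + c$ with $a,b,c \in \tilde k$. The extension is Galois if and only if it is separable, because a separable quadratic extension is automatically normal: if $\alpha \in k(C)$ is a root of $f$, the other root is $-b/a - \alpha$, which lies in $\tilde k(\alpha) = k(C)$. The non-trivial Galois element is then $\sigma : X \mapsto -b/a - X$, and this lifts to the involution
\[
(x,y) \;\longmapsto\; \Bigl(-\tfrac{b(y)}{a(y)} - x,\; y\Bigr),
\]
which is clearly an element of $\mathrm{Jonq}_P$.

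For case (3), $G_P$ has order $3$, hence is cyclic, generated by some $\sigma$ of order $3$. Apply Lemma \ref{Lemma:Galois} with the base field taken to be $K_P = \tilde k = k(Y)$ (the lemma is stated for an arbitrary field): this produces a generator of the form
\[
\sigma : X \;\longmapsto\; \frac{\alpha X + \beta}{\gamma X + \delta}, \qquad \alpha,\beta,\gamma,\delta \in k(Y), \quad \alpha\delta - \beta\gamma \neq 0.
\]
This immediately lifts to the de Jonquières map
\[
(x,y) \;\longmapsto\; \Bigl(\tfrac{\alpha(y)\,x + \beta(y)}{\gamma(y)\,x + \delta(y)},\; y\Bigr),
\]
which lies in $\mathrm{Jonq}_P$ and whose action on $k(C)$ restricts to $\sigma$; since $G_P = \langle \sigma \rangle$, the entire Galois action extends. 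The only genuine input here is Lemma \ref{Lemma:Galois}; the main conceptual point, and the place where I would double-check carefully, is that its coefficient field $k$ can be replaced by the transcendental extension $k(Y)$ without change, so that the resulting coefficients $\alpha, \beta, \gamma, \delta$ are genuinely rational functions in $y$, giving a birational self-map of $\mathbb{P}^2$ and not just of the curve.
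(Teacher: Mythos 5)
Your proposal is correct and follows essentially the same route as the paper: reduce to $P=[1:0:0]$, identify $k(C)$ with $k(y)[x]/(f)$ where $\deg_x f = d-m_P$, handle degree $1$ via Corollary \ref{cor_Bir}, degree $2$ via separability-implies-normality, and degree $3$ by invoking Lemma \ref{Lemma:Galois} over the base field $k(y)$ to get a M\"obius-type generator that lifts to $(x,y)\mapsto\bigl(\tfrac{\alpha(y)x+\beta(y)}{\gamma(y)x+\delta(y)},y\bigr)\in\mathrm{Jonq}_P$. Your case (2) is in fact slightly more explicit than the paper's (the involution $x\mapsto -b/a-x$ written directly, valid in every characteristic, instead of ``$x\mapsto -x$ up to a change of coordinates''), but this is a cosmetic difference rather than a different argument.
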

\begin{proof} We may change coordinates and assume $P=[1:0:0]$. Let $x = X/Z$ and $y = Y/Z$ be affine coordinates. Since the field extension $k(C)/\pi_{P}^{*}(k(\mathbb{P}^1))$ is of degree $d-m_{P}$, then $k(C)=k(y)[x]/(f)$, where $f\in k[x,y]$ is the equation of $C$ in these affine coordinates. 
\begin{itemize}
    \item [(1)] If $d-m_p=1$, then $k(C)=\pi_{P}^{*}(k(\mathbb{P}^1))$ so $\pi_{P}^{*}:k(\mathbb{P}^1)\rightarrow k(C)$ is an isomorphism, so $\pi_{P}:C\dashrightarrow \mathbb{P}^1$ is birational.
    \item [(2)] If $d-m_p=2$, then the extension $k(C)/\pi_{P}^{*}(k(\mathbb{P}^1))$ is of degree $2$ and it is thus Galois if and only if it is separable. $k(C)/\pi_{P}^{*}(k(\mathbb{P}^1))$ is Galois $\Leftrightarrow$ there exists element $\sigma\in G_{P}$ of order $2$ that permutes the roots of $f~\Leftrightarrow~f$ is separable $\Leftrightarrow$ the extension is separable. Moreover, the element $\sigma\in G_P$ of order $2$ is given by $x\mapsto-x$ up to a suitable change of coordinates.
    \item [(3)] If $d-m_{P}=3$, one may write the equation of the curve in the following form $f=F_{d-3}(y,1)x^3+F_{d-2}(y,1)x^2+F_{d-1}(y,1)x+F_{d}(y,1)$, so we get the result directly from Lemma \ref{Lemma:Galois} by replacing $k$ by $k(y)$.\end{itemize}\end{proof}
 \begin{lemma}
      Let $k$ be a field with $char(k)\neq3$ that contains a primitive third root of unity. Let $\phi:\mathbb{P}^1 \rightarrow \mathbb{P}^2$ given by $\phi:[u:v]\mapsto[uv^2+u^2v:u^3: v^3]$. We define $C:=\overline{\phi(\mathbb{P}^1)}$ is a curve of $\mathbb{P}^2$, then the point $P=[1:0:0]$ is a Galois point of $C$ and the extension induced by the projection $\pi_P: C\dashrightarrow \mathbb{P}^1$ is Galois of degree $3$. The element of order $3$ extends to an element of $\mathrm{Jonq}_{P}$
 \end{lemma}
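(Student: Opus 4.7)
The plan is to parametrize $C$ in a convenient affine chart, transport the Galois setup through the parametrization to the standard Kummer cover $t\mapsto t^{3}$ on $\mathbb{P}^{1}$, and then read off the order-$3$ generator of $G_{P}$ as an explicit Möbius transformation in one variable.

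I would first pass to the affine chart $Z=1$ with coordinates $x=X/Z$, $y=Y/Z$ and reparametrize $\phi$ via $t=1/v$, so that in this chart it becomes $t\mapsto(t+t^{2},\,t^{3})$. Expanding $(t+t^{2})^{3}=t^{3}(1+t)^{3}=t^{3}+3t^{4}+3t^{5}+t^{6}=y+3xy+y^{2}$ gives the defining equation of $C$:
\[
x^{3}-3xy-y^{2}-y=0,\qquad\text{or homogeneously}\qquad X^{3}-3XYZ-Y^{2}Z-YZ^{2}=0.
\]
This is a cubic, and substituting $P=[1:0:0]$ gives $1\neq 0$, so $P\notin C$; hence $m_{P}=0$ and Lemma~\ref{lemm:deg} yields $[k(C):K_{P}]=3$.

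Next I would verify that $\phi$ is birational onto $C$ by producing a rational inverse. From $x+y=t+t^{2}+t^{3}=t(1+t+t^{2})$ and $x+1=1+t+t^{2}$, I read off $t=(x+y)/(x+1)\in k(x,y)$, which inverts $\phi$ on $C$. Consequently, $\phi^{*}\colon k(C)\xrightarrow{\sim}k(t)$ is a $k$-algebra isomorphism sending $y$ to $t^{3}$, so $K_{P}$ corresponds to $k(t^{3})\subset k(t)$. Since $k$ contains a primitive third root of unity $\omega$ (satisfying $\omega^{2}+\omega+1=0$), the Kummer extension $k(t)/k(t^{3})$ is Galois of degree $3$ with cyclic Galois group generated by $t\mapsto\omega t$. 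Transporting back through $\phi^{*}$ shows that $k(C)/K_{P}$ is Galois of degree $3$, so $P$ is an (outer) Galois point.

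For the extension to $\mathrm{Jonq}_{P}$, the order-$3$ generator $\sigma\in G_{P}$ corresponds under $\phi^{*}$ to $t\mapsto\omega t$. Substituting $t=(x+y)/(x+1)$ into the identity $(\sigma(x)+y)/(\sigma(x)+1)=\omega(x+y)/(x+1)$ and solving for $\sigma(x)$ gives
\[
\sigma(x)=\frac{(\omega-y)\,x+(\omega-1)\,y}{(1-\omega)\,x+(1-\omega y)},
\]
a Möbius transformation in $x$ with coefficients in $k(y)$. The birational self-map $\varphi\colon\mathbb{A}^{2}\dashrightarrow\mathbb{A}^{2}$ given by $(x,y)\mapsto(\sigma(x),y)$ fixes the second coordinate, so $\pi_{P}\circ\varphi=\pi_{P}$ and therefore $\varphi\in\mathrm{Jonq}_{P}$; by construction $\varphi|_{C}=\sigma$. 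The key insight — and essentially the only non-routine step — is that under the reparametrization $t=1/v$ the projection $\pi_{P}$ becomes the standard Kummer map $t\mapsto t^{3}$; once this is observed, both the Galois property and the explicit Möbius form of $\sigma$ follow by elementary calculation.
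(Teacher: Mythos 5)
Your proof is correct and follows essentially the same route as the paper: both identify $K_P$ with $k(t^3)\subset k(t)$ via the parametrization (your inverse $t=(x+y)/(x+1)$ is exactly the paper's $[X:Y:Z]\mapsto[X+Y:X+Z]$ in the chart $Z=1$), deduce that the extension is the Kummer cover with group generated by $t\mapsto\omega t$, and your explicit M\"obius formula for $\sigma(x)$ coincides with the de Jonquières map the paper writes down. The only difference is that you derive the explicit extension by hand where the paper invokes Theorem~\ref{thm:cases}, and you additionally compute the defining cubic to get the degree, where the paper reads it off from the $3\!:\!1$ map $\psi$.
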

 \begin{proof}
The curve $C$ is birational to $\mathbb{P}^1$ via $\phi$, with inverse $[X:Y:Z]\mapsto [X +Y: X +Z]$. Define the projection by $\pi_P:[X:Y:Z]\mapsto[Y:Z]$. Let $\psi=\pi_{P}\circ\phi$, then $\psi:\mathbb{P}^1\rightarrow\mathbb{P}^1$ maps $[u:v]$ to $[u^3:v^3]$, which is a $3:1$ map, and the extension is Galois of degree $3$ with Galois group $G_{P}$ generated by $\sigma:x\mapsto \omega \cdot x$, where $\omega$ is a primitive cubic root of unity. We have the following commutative diagram
\[\begin{tikzcd}[column sep=large]
    \mathbb{P}^1 \arrow[rightarrow]{r}{\phi}  \arrow[swap]{rd}{\psi} & \mathbb{P}^2 \arrow[dashrightarrow]{d}{\pi_P} \\
        &  \mathbb{P}^1.
    \end{tikzcd}\]
By Theorem \ref{thm:cases}, we know that every element of order $3$ extends to an element of $\mathrm{Jonq}_{P}$. Explicitly $\sigma$ extends to the map that is given by \[[X:Y:Z]\mapsto[\frac{\left(Y-\omega Z \right)X +Y Z(1- \omega)}{(\omega -1)X +Y\omega -Z}:Y:Z].\]
\end{proof}
  \begin{lemma}
     Let $k$ be a field with $char(k)=3$ and $C\subset \mathbb{P}^2$ given by the polynomial $f= X^{3}- Y^{2} X+Z^{3}$, then the point $P=[1:0:0]$ is Galois point of $C$ and the extension induced by the projection $\pi_P: C\dashrightarrow \mathbb{P}^1$ is Galois of degree $3$.
 \end{lemma}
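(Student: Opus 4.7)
The plan is to verify three facts in order: the extension has degree $3$, it is separable, and it admits an automorphism of order $3$.

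First, since $f(1,0,0) = 1 \neq 0$, the point $P = [1:0:0]$ does not lie on $C$, so $m_P = 0$ and Lemma \ref{lemm:deg} gives $[k(C):K_P] = d - m_P = 3$. In the affine chart $Z = 1$ with $x = X/Z$, $y = Y/Z$, the equation becomes $f = x^3 - y^2 x + 1$ and $k(C) = k(y)[x]/(f)$. Since $\partial f/\partial x = 3x^2 - y^2 = -y^2$ is a nonzero element of $k(y)$ in characteristic $3$, $f$ is coprime to its $x$-derivative, hence separable over $k(y)$.

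The key step is to produce an automorphism of order $3$. I would be guided by the Artin--Schreier substitution $t = x/y$: it rewrites $f = 0$ as $t^3 - t + y^{-3} = 0$, an Artin--Schreier equation in characteristic $3$ whose Galois group is cyclic of order $3$, generated by $t \mapsto t + 1$. Translating back to $x$, this suggests $\sigma: x \mapsto x + y$, and direct computation in characteristic $3$ gives
\[
(x+y)^3 - y^2(x+y) + 1 \;=\; x^3 + y^3 - y^2 x - y^3 + 1 \;=\; x^3 - y^2 x + 1,
\]
so $\sigma$ extends to a $k(y)$-algebra endomorphism of $k(y)[x]/(f)$, hence to an automorphism by finite-dimensionality. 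Moreover $\sigma^3(x) = x + 3y = x$, so $\sigma$ has order exactly $3$.

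A separable extension of degree $3$ admitting an automorphism of order $3$ is Galois, so $k(C)/K_P$ is Galois of degree $3$ with Galois group $\langle \sigma\rangle$. There is no real obstacle in this argument; the only step requiring insight is the guess for $\sigma$, which the Artin--Schreier reformulation makes immediate, and the map $\sigma: (x,y) \mapsto (x+y, y)$ is visibly an element of $\mathrm{Jonq}_P$, so the Galois action extends to a de Jonquières map exactly as in Theorem \ref{thm:cases}.
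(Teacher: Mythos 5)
Your proof is correct, but it takes a different route from the paper's. The paper first exhibits the rational parametrization $\phi:[u:v]\mapsto[v^3:u^3:u^2v-v^3]$ (with inverse $[X:Y:Z]\mapsto[Y:Z+X]$), computes $\psi=\pi_P\circ\phi:[u:v]\mapsto[u^3:u^2v-v^3]$, and shows this $3:1$ cover of $\mathbb{P}^1$ is Galois by exhibiting the deck transformation $[u:v]\mapsto[u:u+v]$ of order $3$; the Galois group is then read off on the parameter line. You instead never leave the function field: you get the degree from Lemma~\ref{lemm:deg} (noting $P\notin C$, so $m_P=0$), check separability via $\partial f/\partial x=-y^2\neq 0$, and produce the order-$3$ automorphism $x\mapsto x+y$ directly, motivated by the Artin--Schreier normal form $t^3-t+y^{-3}=0$ after $t=x/y$. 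The two arguments meet at the end: your $\sigma:(x,y)\mapsto(x+y,y)$ is exactly the linear map $[X:Y:Z]\mapsto[X+Y:Y:Z]$ that the paper writes down as the explicit extension to $\mathrm{Jonq}_P$. What the paper's route buys is the rationality of $C$ (and hence an explicit identification $k(C)\cong k(u/v)$) as a by-product; what your route buys is independence from guessing a parametrization, plus a conceptual explanation (Artin--Schreier theory in characteristic $3$) for why the Galois generator is a translation rather than a root-of-unity scaling. One small point worth making explicit in your version: you should note that $f=x^3-y^2x+1$ is irreducible over $k(y)$ (e.g.\ it has no root in $k[y]$ by degree considerations), since both the degree count and the statement that $k(y)[x]/(f)$ is a field rest on it; the paper gets this implicitly from the parametrization.
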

\begin{proof}
Define the birational map $\phi:\mathbb{P}^1 \dashrightarrow C$ by $ \phi:[u:v] \mapsto [v^{3}: u^{3} : u^{2}  v- v^3]$ with inverse $[X:Y:Z]\mapsto[Y:Z+X]$. Define the projection by $\pi_P:[X:Y:Z]\mapsto[Y:Z]$. Let $\psi=\pi_{P}\circ\phi$, then $\psi:\mathbb{P}^1\rightarrow\mathbb{P}^1$ maps $[u:v]$ to $[u^{3} : u^{2}  v- v^3]$, which is a $3:1$ map, and the extension is Galois of degree $3$ with Galois group $G_{P}$ generated by $\sigma:[u:v]\mapsto[u:u+v]$. We have the following diagram
\[\begin{tikzcd}[column sep=large]
 \mathbb{P}^1 \arrow[rightarrow]{r}{\phi}  \arrow[swap]{rd}{\psi} & \mathbb{P}^2 \arrow[dashrightarrow]{d}{\pi_P} \\
       &  \mathbb{P}^1.
    \end{tikzcd}\]
By Theorem \ref{thm:cases}, we know that every element of order $3$ extends to an element of $\mathrm{Jonq}_{P}$. Explicitly $\sigma$ extends to the map that is given by $\sigma$ that is given by $[X:Y:Z]\mapsto[X +Y :Y:Z]$.
 \end{proof}
\section{Curves that are Cremona equivalent to a line}
\begin{definition}
Let $X$ be a smooth projective variety and $D$ a divisor in $X$. Let $K_X$ denote a canonical divisor of $X$. We define the Kodaira dimension of $D\subset X$, written $\mathcal{K}(D,X)$ to be the dimension of the image of $X \mapsto P(H^{0}(m(D+ K_X)))$ for $m>>0$. By convention we say that the Kodaira dimension is $\mathcal{K}(D,X)= -\infty$ if $|m(D+ K_X)|=\phi ~\forall m>0$.
\end{definition}
 \begin{theorem}\cite[Corollary $2.4$]{MR685529}\label{cor:kod}
 Let $X$ be smooth rational surface and $D\subset X$ is a curve isomorphic to $\mathbb{P}^1$, then the following are equivalent:
 \begin{enumerate}
   \item $\mathcal{K}(D,X)=-\infty$
  \item $|2K_{X}+D|=\phi.$
  \item $|2(K_{X}+D)|=\phi.$
   \end{enumerate}
\end{theorem}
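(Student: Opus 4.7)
The plan is to establish the three conditions are equivalent by separately proving $(2)\Leftrightarrow(3)$ and $(1)\Leftrightarrow(3)$. The implication $(1)\Rightarrow(3)$ is immediate by specializing the defining condition of $\mathcal{K}(D,X)=-\infty$ to $m=2$; the non-trivial parts are $(2)\Leftrightarrow(3)$ and $(3)\Rightarrow(1)$.

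For $(2)\Leftrightarrow(3)$, I would use the short exact sequence
\[0 \to \mathcal{O}_X(2K_X+D) \to \mathcal{O}_X(2(K_X+D)) \to \mathcal{O}_D\bigl(2(K_X+D)|_D\bigr) \to 0.\]
By adjunction on $D\cong\mathbb{P}^1$, one has $(K_X+D)\cdot D = \deg K_D = -2$, so the quotient is $\mathcal{O}_{\mathbb{P}^1}(-4)$, which has no global sections. The long exact cohomology sequence then gives $H^0(X,2K_X+D)\cong H^0(X,2(K_X+D))$, so the two linear systems are simultaneously empty.

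The hard direction is $(3)\Rightarrow(1)$: assuming $|2(K_X+D)|=\emptyset$, one must show $|m(K_X+D)|=\emptyset$ for every $m\geq 1$. For $m=1$, the analogous sequence $0\to\mathcal{O}_X(K_X)\to\mathcal{O}_X(K_X+D)\to\mathcal{O}_D(-2)\to 0$ together with rationality of $X$ (so $h^0(K_X)=0$) yields $h^0(K_X+D)=0$. For $m\geq 3$, I would pick an effective $E\in|m(K_X+D)|$, observe that $E\cdot D=-2m<0$ forces $D\subset\mathrm{Supp}(E)$, write $E=aD+E'$ with $a\geq 1$ and $D\not\subset\mathrm{Supp}(E')$, and iterate the restriction sequences
\[0\to\mathcal{O}_X(mK_X+(k-1)D)\to\mathcal{O}_X(mK_X+kD)\to\mathcal{O}_D\bigl((mK_X+kD)|_D\bigr)\to 0\]
for $k$ decreasing from $m$ toward $1$, aiming to obtain either $h^0(sK_X)\neq 0$ (contradicting rationality of $X$) or $h^0(2K_X+D)\neq 0$ (contradicting the hypothesis via $(2)\Leftrightarrow(3)$).

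The main obstacle is the case of very negative self-intersection $D^2$: the degree $\deg_D\bigl((mK_X+kD)|_D\bigr)=-2m+(k-m)D^2$ can become non-negative when $|D^2|$ is large compared to $m$, and then the quotient sheaf acquires global sections and the iterative cohomological argument breaks down. The natural way around this is to exploit the rationality of $X$ more substantially, for instance by contracting $D$ when it is a $(-1)$-curve, or more generally by performing a Mori-type contraction onto a simpler rational surface, and transporting the emptiness of $|2(K_X+D)|$ through the contraction, thereby reducing to the case $D^2\geq 0$ where the pure cohomological argument applies uniformly.
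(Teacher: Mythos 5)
The paper offers no proof of this statement --- it is quoted verbatim as Corollary~2.4 of \cite{MR685529} --- so the comparison can only be against the known argument there. Your treatment of the easy parts is correct: $(1)\Rightarrow(3)$ is immediate from the definition, and the restriction sequence with $\deg\bigl(2(K_X+D)|_D\bigr)=2\deg K_D=-4<0$ cleanly gives $H^0(X,2K_X+D)\cong H^0(X,2(K_X+D))$, hence $(2)\Leftrightarrow(3)$. The $m=1$ case of $(3)\Rightarrow(1)$, and the case $D^2\geq 0$ (where every restriction degree $-2m+(k-m)D^2\leq-2m$ is negative and the iteration terminates at $h^0(mK_X)=0$), also go through.

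The implication $(3)\Rightarrow(1)$ when $D^2<0$ is, however, the entire content of the theorem --- and the only case relevant to the paper's application, where the strict transform $\tilde C$ has $\tilde C^2=3d-2-\sum m_i$, typically negative --- and you leave it as an acknowledged obstacle with a one-sentence strategy that does not work as stated. You cannot contract $D$ itself (the pair then ceases to exist, and $\mathcal{K}(D,X)$ is only a birational invariant of pairs in which $D$ stays a curve). If instead you contract some other $(-1)$-curve $E$ with $E\cdot D=e$, then $K_X+D=\pi^*(K_{X'}+D')+(1-e)E$, so $2(K_X+D)=\pi^*\bigl(2(K_{X'}+D')\bigr)-(2e-2)E$; for $e\geq2$ the emptiness of $|2(K_X+D)|$ therefore transports the \emph{wrong} way: it only says that no member of $|2(K_{X'}+D')|$ has multiplicity $\geq 2e-2$ at the contracted point, not that the system is empty (and $D'$ acquires a singularity, so it is no longer isomorphic to $\mathbb{P}^1$). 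The actual proof must produce $(-1)$-curves meeting $D$ in at most one point --- for which the hypothesis $|2K_X+D|=\emptyset$ is used in an essential way --- contract them, and induct down to $\mathbb{P}^2$ or a Hirzebruch surface where the claim is checked directly. Producing those curves is the real work, and it is entirely absent from your sketch, so the hard direction remains a genuine gap.
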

 \begin{definition}
     Let $C\subset \mathbb{P}^2$ be an irreducible curve. If $C$ is a smooth curve then we define $\bar{\mathcal{K}}(C,\mathbb{P}^2)$ to be $\mathcal{K}(C,\mathbb{P}^2)$, and if $C$ is a singular curve, we take $X\rightarrow \mathbb{P}^2$ to be an embedded resolution of singularities of $C$ in $\mathbb{P}^2$ where $\Tilde{C}$ is the strict transform of $C$, then we define $\bar{\mathcal{K}}(C,\mathbb{P}^2)$ to be $\mathcal{K}(\Tilde{C},X)$. By \cite{MR685529}, this does not depend on the choice of the resolution.
 \end{definition}
\begin{definition}
Let $C$ be an irreducible smooth plane curve. The curve $C$ is said to be Cremona equivalent to a line if there is a birational map $\varphi:\mathbb{P}^2\dashrightarrow\mathbb{P}^2$ that sends $C$ to a line.
\end{definition}
\begin{theorem}(Coolidge) \label{thm_coolidge}\cite[Theorem $2.6$]{MR685529} 
Let $C\hookrightarrow \mathbb{P}^2 $ be an irreducible rational curve. Then there exists a Cremona transformation $\sigma$ of $\mathbb{P}^2$ such that $\sigma(C)$ is a line if and only if $\overline{\mathcal{K}}(C,\mathbb{P}^2)=-\infty$.
\end{theorem}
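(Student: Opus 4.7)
My plan treats the two directions of the biconditional separately, with the converse being substantially harder. The easy direction reduces to a direct computation once one has the birational invariance of $\overline{\mathcal{K}}(-,\mathbb{P}^2)$: if $\sigma \in \mathrm{Bir}(\mathbb{P}^2)$ satisfies $\sigma(C) = L$ with $L$ a line, the quantity agrees on $C$ and on $L$. To establish this invariance I would examine what happens under a single blow-up $\pi: X' \to X$ at a point $p$: when $p \in \tilde{C}$, one has $K_{X'} + \tilde{C}' = \pi^*(K_X + \tilde{C})$, so pulling back gives a bijection between sections of $m(K + \tilde{C})$ and of $m(K' + \tilde{C}')$; when $p \notin \tilde{C}$, the class $K_{X'} + \tilde{C}' = \pi^*(K_X + \tilde{C}) + E$ has $(K_{X'} + \tilde{C}')\cdot E = -1$, so every effective divisor in $|m(K' + \tilde{C}')|$ must contain $E$ with multiplicity $m$ and the same bijection persists. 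For the line itself, $K_{\mathbb{P}^2} + L \sim -2L$, so $|2(K_{\mathbb{P}^2} + L)| = |{-4L}| = \emptyset$, and Theorem~\ref{cor:kod} yields $\overline{\mathcal{K}}(L, \mathbb{P}^2) = -\infty$.

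For the converse, assume $\overline{\mathcal{K}}(C, \mathbb{P}^2) = -\infty$ and choose an embedded resolution $\pi: X \to \mathbb{P}^2$ with strict transform $\tilde{C} \cong \mathbb{P}^1$. By Theorem~\ref{cor:kod}, $|2(K_X + \tilde{C})| = \emptyset$. I would then iteratively contract $(-1)$-curves $E \neq \tilde{C}$ and perform elementary transformations on the Hirzebruch components of intermediate models, each step descending to a Cremona transformation of $\mathbb{P}^2$. The invariance computation above shows that the hypothesis $|2(K + \tilde{C})| = \emptyset$ is preserved throughout. Since $X$ is rational, the procedure terminates with $\tilde{C}$ embedded in a minimal rational surface, namely $\mathbb{P}^2$ or some $\mathbb{F}_n$. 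There the adjoint vanishing forces $\tilde{C}$ into a restricted list of numerical classes, and for each I would exhibit an explicit sequence of elementary transformations carrying $\tilde{C}$ to a line in $\mathbb{P}^2$.

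The main obstacle is to control this birational game so that $\tilde{C}$ itself is never selected for contraction and so that every move actually descends to an element of $\mathrm{Bir}(\mathbb{P}^2)$, rather than only to an abstract birational equivalence of auxiliary surfaces. The subtle case is when $\tilde{C}^2$ is large and $\tilde{C}$ meets many exceptional components simultaneously: there one must choose the contractions guided by the adjoint vanishing to avoid configurations where $\tilde{C}$ cannot be simplified further. This careful inductive analysis is the technical core of Coolidge's theorem in the form revisited by Kumar--Murthy \cite{MR685529}, whose framework I would follow rather than attempting an independent proof.
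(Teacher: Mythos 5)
The paper does not prove this statement at all: it is quoted verbatim as Theorem 2.6 of Kumar--Murthy \cite{MR685529}, so there is no internal proof to compare your proposal against. Judged on its own terms, your sketch of the forward direction is correct and essentially complete: the single-blow-up computation ($K_{X'}+\tilde{C}'=\pi^*(K_X+\tilde{C})$ at a point of $\tilde{C}$, and $\pi^*(K_X+\tilde{C})+E$ with $(K_{X'}+\tilde{C}')\cdot E=-1$ forcing $E$ into the fixed part otherwise) gives the birational invariance of $\overline{\mathcal{K}}$, and $|2(K_{\mathbb{P}^2}+L)|=|-4L|=\emptyset$ handles the line. For the converse, however, what you have written is a plan rather than a proof: the actual content of Coolidge's theorem is precisely the classification of the numerical classes on minimal models forced by $|2K_X+\tilde{C}|=\emptyset$ and the explicit elementary transformations reducing each to a line, and you defer exactly this core to \cite{MR685529}. (One small reassurance: your worry about moves ``descending to $\mathrm{Bir}(\mathbb{P}^2)$'' is not a genuine obstacle --- any composite $\mathbb{P}^2\dashleftarrow X\dashrightarrow\cdots\dashrightarrow\mathbb{P}^2$ of blow-ups, blow-downs and elementary transformations beginning and ending at $\mathbb{P}^2$ is automatically a Cremona transformation.) Since the paper itself treats the theorem as a black box from the same reference, your proposal is consistent with the paper's usage, but it should not be read as supplying the missing hard direction.
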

\begin{lemma}
    If $C\hookrightarrow \mathbb{P}^2 $ is an irreducible rational curve of degree $d<6$, then $C$ is equivalent to a line.
\end{lemma}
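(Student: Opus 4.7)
The plan is to verify the hypothesis of Coolidge's theorem (Theorem \ref{thm_coolidge}), namely $\overline{\mathcal{K}}(C,\mathbb{P}^2) = -\infty$, by using the equivalent formulation given in Theorem \ref{cor:kod}. Specifically, I will use condition (2) of that theorem, which asks that $|2K_X + \widetilde{C}|$ be empty, since an intersection calculation against the pullback of a line makes this condition immediate for small degree.

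First, I take $\pi\colon X\to \mathbb{P}^2$ to be an embedded resolution of singularities of $C$ (a sequence of point blow-ups), and let $\widetilde{C}\subset X$ be the strict transform. By construction $X$ is a smooth rational surface and $\widetilde{C}$ is smooth; since $C$ is rational, $\widetilde{C}\simeq \mathbb{P}^1$, so Theorem \ref{cor:kod} applies.

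Next, let $H\subset \mathbb{P}^2$ be a line. The exceptional divisors $E_i$ of $\pi$ all satisfy $E_i \cdot \pi^*H = 0$ by the projection formula, and we have $K_X = \pi^*K_{\mathbb{P}^2} + \sum a_i E_i$ and $\widetilde{C} = \pi^*C - \sum m_i E_i$ for some integers $a_i, m_i$. Therefore
\[
(2K_X + \widetilde{C})\cdot \pi^*H \;=\; (2K_{\mathbb{P}^2} + C)\cdot H \;=\; -6 + d \;<\; 0,
\]
using $d<6$. Since $\pi^*H$ is nef, any effective divisor intersects it non-negatively, so no effective divisor can be linearly equivalent to $2K_X + \widetilde{C}$. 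Hence $|2K_X + \widetilde{C}| = \emptyset$.

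Applying Theorem \ref{cor:kod} we conclude $\mathcal{K}(\widetilde{C},X) = -\infty$, i.e.\ $\overline{\mathcal{K}}(C,\mathbb{P}^2) = -\infty$, and Coolidge's Theorem \ref{thm_coolidge} then produces a Cremona transformation sending $C$ to a line. There is really no serious obstacle here; the only subtlety is recognizing that it is more efficient to test the Kodaira-type vanishing via $|2K_X+\widetilde{C}|$ rather than $|2(K_X+\widetilde{C})|$, since the former gives $d-6$ directly upon intersecting with $\pi^*H$, collapsing the problem to a one-line numerical check.
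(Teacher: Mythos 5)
Your proof is correct and takes essentially the same route as the paper: both pass to an embedded resolution, compute $(2K_X+\widetilde{C})\cdot\pi^*L = d-6<0$, deduce $|2K_X+\widetilde{C}|=\emptyset$ from the nefness of the pullback of a line, and then invoke Theorem \ref{cor:kod} together with Coolidge's Theorem \ref{thm_coolidge}. The only cosmetic difference is that you obtain the intersection number via the projection formula while the paper expands $K_X$ and $\widetilde{C}$ explicitly in the basis of $\mathrm{Pic}(X)$; you are also slightly more careful in noting explicitly that $\widetilde{C}\simeq\mathbb{P}^1$, which is the hypothesis needed to apply Theorem \ref{cor:kod}.
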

\begin{proof}
Let $\pi_1:X_1 \rightarrow \mathbb{P}^2$ be the blow-up of $\mathbb{P}^2$ at $P_1$, and let $\pi_i:X_{i} \rightarrow X_{i-1}$ the blow-up of $X_{i-1}$ at $P_i\in X_{i-1}$ for $i\geq2$, $\mathcal{E}_{i}=\pi_i^{-1}(P_i)$ is a $(-1)$-curve, where $\mathcal{E}_{i}^2=-1$ and $\mathcal{E}_{i}\cong \mathbb{P}^1$. After blowing up $n$ points, let $\pi: Y\mapsto\mathbb{P}^2 $ be the composition of the blow-ups $\pi_i$, where $Y=X_n$ we choose enough points such that the strict transform of $C$ is smooth. By induction, we have $E_i=(\pi_{i+1}\circ.....\circ \pi_{n})^{*}(\mathcal{E}_i)$, $Pic(Y)=\pi^*(Pic(\mathbb{P}^2))\oplus \mathbb{Z}E_{1}\oplus...\oplus \mathbb{Z}E_{n}$, and $E_i^2=-1$ for every $i=1,...,n,$ and $E_i\cdot E_j=0$ for every $i\neq j$. Moreover,
\begin{align*}
K_{Y}&=\pi_{n}^*.... \pi_1^*(K_{\mathbb{P}^2})+\Sigma_{i=1}^{n}\pi_{n}^*.... \pi_{i+1}^*(\epsilon_{i})\\
&=\pi^*(K_{\mathbb{P}^2})+\Sigma_{i=1}^{n}E_{i}\\ 
&=-3\pi^*(L)+\Sigma_{i=1}^{n}E_{i}.
\end{align*}
The strict transform $\Tilde{C}\subset C$ is equivalent to $\Tilde{C}=d \cdot \pi^*(L)-\Sigma_{i=1}^{n}m_{P_i}(C)E_{i}$. Hence we have
\begin{align*}
2 K_Y+\Tilde{C}=(-6+d)\cdot \pi^*(L)+\Sigma_{i=1}^{n}(2-m_{P_i}(C))E_{i},
\end{align*}
so $\pi^*(L)\cdot(2 K_{Y}+\Tilde{C})=-6+d$, hence $|2 K_{Y}+\Tilde{C}|=\phi$ for every curve of degree $d<6$ and according to \ref{cor:kod}, we have $\bar{\mathcal{K}}(C,\mathbb{P}^2)=-\infty$, so $C$ is equivalent to a line.
\end{proof}
\begin{lemma}
    If $C$ is a Cremona equivalent to a line $L\subseteq \mathbb{P}^2$, $P$ is a Galois point, then every non-trivial element in $G_{P}$ extends to an element in $\mathrm{Bir}(\mathbb{P}^2)$.
\end{lemma}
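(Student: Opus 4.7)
The plan is to use the Cremona equivalence $\varphi\in \mathrm{Bir}(\mathbb{P}^2)$ with $\varphi(C)=L$ to transport the $G_P$-action on $C$ to an action on the line $L$, exploit the fact that every element of $\mathrm{Aut}(L)=\mathrm{PGL}_2(k)$ extends trivially to an element of $\mathrm{PGL}_3(k)\subset \mathrm{Bir}(\mathbb{P}^2)$, and then conjugate back by $\varphi$. The whole strategy works because a line in $\mathbb{P}^2$ is the rare curve whose self-automorphisms all extend to the ambient plane, so the obstruction to extending the Galois group to $\mathrm{Bir}(\mathbb{P}^2)$ evaporates as soon as we can move $C$ to a line.

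In more detail, I would first observe that since $\varphi$ restricts to a birational map $\varphi|_C: C\dashrightarrow L$ between irreducible curves, it induces a $k$-isomorphism $\varphi|_C^{*}: k(L)\xrightarrow{\sim} k(C)$. Given $\sigma\in G_P\subseteq \mathrm{Aut}_k(k(C))$, I would set $\tau^{*} := (\varphi|_C^{*})^{-1}\circ\sigma\circ\varphi|_C^{*}\in \mathrm{Aut}_k(k(L))$; by Corollary \ref{cor_Bir} together with Lemma \ref{LemBir1}, this corresponds to a Möbius transformation $\tau\in \mathrm{Aut}(L)$. Choosing homogeneous coordinates on $\mathbb{P}^2$ so that $L=\{Z=0\}$, I would then write $\tau: [X:Y]\mapsto [aX+bY: cX+dY]$ with $ad-bc\neq 0$ and extend it to the linear automorphism $\tilde{\tau}: [X:Y:Z]\mapsto [aX+bY: cX+dY: Z]$ of $\mathbb{P}^2$, which lies in $\mathrm{PGL}_3(k)\subset \mathrm{Bir}(\mathbb{P}^2)$ and satisfies $\tilde{\tau}|_L=\tau$.

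Finally, I would define $\Phi := \varphi^{-1}\circ\tilde{\tau}\circ\varphi\in \mathrm{Bir}(\mathbb{P}^2)$. Since $\tilde{\tau}(L)=L$, the map $\Phi$ preserves $C$, and a short contravariant diagram chase on function fields yields $(\Phi|_C)^{*} = \varphi|_C^{*}\circ\tau^{*}\circ (\varphi|_C^{*})^{-1} = \sigma$, so $\Phi$ does extend $\sigma$. There is essentially no obstacle here — the argument is purely formal and runs identically for every $\sigma\in G_P$, non-trivial or not, and in fact exhibits an embedding of the whole group $G_P$ into $\mathrm{Bir}(\mathbb{P}^2)$ via $\sigma\mapsto \Phi$. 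The result is thus a soft consequence of Theorem \ref{thm_coolidge} once $C$ is replaced by a line.
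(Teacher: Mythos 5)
Your argument is correct and is essentially identical to the paper's proof: both conjugate each $\sigma\in G_P$ through the Cremona equivalence $\varphi|_C$ to an element of $\mathrm{Aut}(L)\cong\mathrm{PGL}_2(k)$, extend that linearly to an automorphism of $\mathbb{P}^2$, and conjugate back to obtain $\varphi^{-1}\circ\tilde{\tau}\circ\varphi\in\mathrm{Bir}(\mathbb{P}^2)$. You merely spell out the function-field bookkeeping that the paper leaves implicit.
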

\begin{proof}
    Let $\varphi\in \mathrm{Bir}(\mathbb{P}^2)$ that sends $C$ onto a line $L$. For each $g\in G_{P},~\varphi|_{C}:C\dashrightarrow L$ conjugates $g$ to an element of $\mathrm{Aut}(L)$, that extends to $\hat{g}\in \mathrm{Aut}t(\mathbb{P}^2)$. Hence, $g$ extends to $\varphi^{-1}\hat{g}\varphi\in \mathrm{Bir}(\mathbb{P}^2)$.
\end{proof}
\begin{remark}\label{Rem:conic}
     Let $C$ be the smooth conic given by $C= \{[X : Y : Z] | Y^2=XZ\}\subset \mathbb{P}^2$, then the natural embedding of $\mathrm{Aut}(\mathbb{P}^2, C)=\{g\in \mathrm{Aut}(\mathbb{P}^2)|g(C)=C\}= PGL_2$ in $\mathrm{Aut}(\mathbb{P}^2) = PGL_3$ is the one induced from the injective group homomorphism
     \begin{equation*}
         GL_2(k)\rightarrow GL_3(k),~ \left[\begin{array}{cc}
             a & b \\
             c & d
         \end{array}\right]\mapsto \frac{1}{ad-bc}\left[\begin{array}{ccc}
             a^2 & ab&b^2 \\
             2ac & ad+bc&2bd\\
             c^2&cd&d^2
         \end{array}\right]
     \end{equation*}
     where $\rho:[u:v]\mapsto[u^2:uv:v^2]$, and the following diagram commutes.
\[\begin{tikzcd}[column sep=small]
\mathbb{P}^1\arrow[rr, dashed,->,"\rho"] \arrow[d,"g"]&&\tilde{C}\subset  \mathbb{P}^2 \arrow[d,"\tilde{g}"] \\
\mathbb{P}^1\arrow[rr, "\rho"] & & \mathbb{P}^2 
\end{tikzcd}\]
\end{remark}
 \begin{lemma}\label{toBir}
 Let $k$ be a field of characteristic $char(k)\neq2$ containing a primitive fourth root of unity, and let $C$ be the irreducible curve defined by the equation 
\begin{equation}
X^{4}-4 Z Y X^{2}-Z Y^{3}+2 Z^{2} Y^{2}-Y Z^{3} =0,
\end{equation}
then the point $P=[1:0:0]$ is an outer Galois point of $C$ and the extension induced by the projection $\pi_P: C\dashrightarrow \mathbb{P}^1$  is Galois of degree $4$. The group $G_{P}$ extends to $\mathrm{Bir}(\mathbb{P}^2)$ but not to $\mathrm{Jonq}_{P}$.
\end{lemma}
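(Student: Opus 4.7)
The plan is to start by finding an explicit rational parametrization of $C$. Rewriting the defining equation in affine coordinates $x=X/Z$, $y=Y/Z$ as $(x^2-2y)^2=y(y+1)^2$ suggests setting $t^2=(x^2-2y)/(y+1)$, which leads to the birational parametrization $\phi\colon\mathbb{P}^1\to C$, $[u:v]\mapsto[uv(u^2+v^2):u^4:v^4]$ (verified by direct substitution into the defining equation). Composing with $\pi_P\colon[X:Y:Z]\mapsto[Y:Z]$ yields $\pi_P\circ\phi\colon[u:v]\mapsto[u^4:v^4]$, a Kummer cover of degree $4$. Since $k$ contains a primitive fourth root of unity $i$, this extension is Galois with cyclic group $G_P\cong\mathbb{Z}/4\mathbb{Z}$, generated by the automorphism $[u:v]\mapsto[iu:v]$. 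Together with the observation that $P=[1:0:0]\notin C$, this confirms that $P$ is an outer Galois point and $[k(C):K_P]=4$.

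For the extension of $G_P$ to $\mathrm{Bir}(\mathbb{P}^2)$, the curve $C$ is rational of degree $4<6$, so by the earlier lemma it is Cremona equivalent to a line, and the subsequent lemma then delivers an extension of every element of $G_P$ (and hence of the whole group, by composition) to $\mathrm{Bir}(\mathbb{P}^2)$.

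For the non-extension to $\mathrm{Jonq}_P$, I would first make the generator $\sigma$ of $G_P$ explicit as a birational self-map of $C$ in affine coordinates. On the parametrizing $\mathbb{P}^1$, $\sigma$ acts by $t\mapsto it$; translating via $\phi$ and using the identity $t^2=(x^2-2y)/(y+1)$ on $C$, one obtains
\[
\sigma\colon(x,y)\longmapsto\left(\frac{-ix(x^2-3y-1)}{x^2-y+1},\,y\right).
\]
Suppose for contradiction that $\tilde\sigma\in\mathrm{Jonq}_P$ restricts to $\sigma$ on $C$. Since $\sigma$ fixes $y$ on $C$ and the projection $(x,y)\mapsto y$ is dominant on $C$, the $y$-component of $\tilde\sigma$ must be the identity. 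Hence $\tilde\sigma(x,y)=\bigl((a(y)x+b(y))/(c(y)x+d(y)),\,y\bigr)$ with $a(y)d(y)-b(y)c(y)\neq0$. Equating the first coordinate of $\tilde\sigma$ with $\sigma(x)$ as elements of $k(C)$, cross-multiplying, and reducing $x^4$ via the defining relation $x^4=4yx^2+y(y-1)^2$ on $C$, I would then compare coefficients in the $k(y)$-basis $\{1,x,x^2,x^3\}$ of $k(C)$. This produces four linear equations over $k(y)$ that force $a=b=c=d=0$, contradicting $ad-bc\neq 0$.

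The main obstacle is the last step: the reduction modulo the curve equation must be performed carefully so that the four coefficient identities are correctly extracted; once this is done, the resulting linear system is essentially immediate to solve. The preceding geometric reductions (finding $\phi$, applying the Cremona-equivalence lemma, and writing $\sigma$ down explicitly) are conceptually clear and computationally short.
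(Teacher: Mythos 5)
Your proposal is correct and follows essentially the same route as the paper: the parametrization $[u:v]\mapsto[uv(u^2+v^2):u^4:v^4]$ is exactly the paper's $t\mapsto[t+t^3:t^4:1]$ written projectively, the extension to $\mathrm{Bir}(\mathbb{P}^2)$ is obtained from the same Coolidge/degree-$<6$ lemmas, and the non-extension to $\mathrm{Jonq}_P$ is the same coefficient-comparison contradiction, merely carried out in the basis $\{1,x,x^2,x^3\}$ of $k(C)$ over $k(y)$ rather than pulled back to $k(t)$ over $k(t^4)$ as the paper does. I checked your explicit formula $\sigma(x)=-ix(x^2-3y-1)/(x^2-y+1)$ and the resulting linear system (it forces $d=a=0$ from the $x^3,x^1$ coefficients and $c=b=0$ from the $x^2,x^0$ coefficients), so the step you flag as the main obstacle does go through.
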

\begin{proof}
Define the birational map $\phi:\mathbb{A}^1 \dashrightarrow C$ by $\phi:t\mapsto[t+t^3:t^4:1]$ with inverse $[X:Y:Z]\mapsto \left(X \left(Y +Z \right)\right)/(X^{2}-Y Z +Z^{2})$. Hence $C$ is a rational irreducible curve of degree $4$. Therefore, because of Coolidge theorem \ref{thm_coolidge}, we know that every non-trivial element in $G_{P}$ extends to an element in $\mathrm{Bir}(\mathbb{P}^2)$. We will also prove it explicitly below.  We have $K(C)=k(t)$ and define the projection by $\pi_P:[X:Y:Z]\mapsto[Y:Z]$. Let $x = X/Z$ and $y = Y/Z$ be affine coordinates. Hence the affine equation $x^{4}-4 y x^{2}- y^{3}+2 y^{2}-y  =0$ is defining the extension field $k(y)[x]/k(y)=k(t)/k(t^4)$. As $k$ contains the $4th$ root of unity, the extension is Galois of degree $4$ with basis $\{1,t, t^2,t^3\}$, and we have the following diagram
\[\begin{tikzcd}[column sep=large]
    \mathbb{A}^1 \arrow[dashrightarrow]{r}{\phi}  \arrow[swap]{rd}{\psi} & C \subseteq  \mathbb{P}^2 \arrow{d}{f} \\
        &  \mathbb{A}^1
    \end{tikzcd}\]
 where $\psi$ is a $4:1$ map given by $\psi:t\mapsto t^4$. By contradiction, we prove that there is a de Jonquières map $f$ extending the action. Let us assume that there exists a de Jonquières map $g$ that extends the action, $i.e$ there exists $\tilde{\alpha},\tilde{\beta}, \tilde{\gamma},\tilde{\delta}\in k(y)$ with $\tilde{\alpha}\tilde{ \delta}-\tilde{\beta} \tilde{\gamma}\neq0$ such that $g:(x,y)\mapsto(\frac{\tilde{\alpha} x+\tilde{\beta}}{\tilde{\gamma} x+\tilde{\delta}},y)$. As $g\circ\phi=\phi\circ\sigma$, writing $\alpha=\tilde{\alpha}(t^4),\beta=\tilde{\beta}(t^4), \gamma=\tilde{\gamma}(t^4)$ and $\delta=\tilde{\delta}(t^4)$ where $\alpha,\beta,\gamma, \delta\in k(t^4)$. We obtain the equation
\[\mathrm{i}t-\mathrm{i}t^3=\frac{\alpha (t+t^3)+\beta}{\gamma (t+t^3)+\delta}.\] This gives $\beta = \beta(t)=-(\mathrm{i}   t^{6}- \mathrm{i})\gamma t^{2}-(\mathrm{i} \delta +\alpha ) t^{3}+(\delta\mathrm{i}-\alpha )t\in k(t^4)$ and then we have $(\mathrm{i} t^{4}- \mathrm{i})\gamma=0, \mathrm{i} \delta +\alpha=0$ and $\delta\mathrm{i}-\alpha=0$. This gives $\alpha=0,\gamma=0$ and this gives a contradiction, so there is no de Jonquières map $f$ extending the action.\\
Viewing $C$ as an irreducible curve in $\mathbb{P}^2$ of degree $4$, there are three singular points on the curve $ [0: 1: 1], \left[\mathrm{i} \sqrt{2}:-1: 1\right], \left[\mathrm{i} \sqrt{2}: 1: -1\right]$. After suitable change of coordinates $\sigma:\mathbb{P}^2\rightarrow\mathbb{P}^2$ given by $[X:Y:Z]\mapsto \left[-\mathrm{i} \sqrt{2} X-\mathrm{i} \sqrt{2} Z: 2 X+Y-Z : 2X-Y+Z \right]$, this map sends the curve $C$ to $\tilde{C}$, which is given by $\tilde{f}=X^{2} Y^{2}+6 X^{2} Y Z +X^{2} Z^{2}+4 Y^{2} Z^{2}=0$ and this new equation has $\{[1:0:0], [0:1:0],[0:0:1]\}$ as multiple points of order $2$. After blowing up the three points $\{[1:0:0], [0:1:0],[0:0:1]\}$ in $\mathbb{P}^2$ and contract again, we find that the strict transform curve is of degree $d'=2 \cdot d-m_1-m_2-m_3=2\cdot4-2-2-2=2$, so it is a conic given by the equation $F=4 X^{2}+Y^{2}+6 Y Z +Z^{2}=0$, after a suitable change of coordinates using the following matrix,
 \[\left[\begin{array}{ccc}
4\mathrm{I} & 0 & -\mathrm{I} 
\\
 0 & 2\sqrt{2} & 0 
\\
8 & -6 \sqrt{2} & 2
\end{array}\right]\]
we can send the conic to $Y^2-XZ=0$. We can then extend $G_{P}$ explicitly using Remark \ref{Rem:conic}
\end{proof}
\section{Example where $G_{P}$ can not be extended to $Bir(\mathbb{P}^2)$}
\begin{lemma}\label{multip}
 Let k be an algebraically closed field, $C\subset\mathbb{P}^2$ be an irreducible curve, $f: \mathbb{P}^2\dashrightarrow \mathbb{P}^2$ be a birational map sends the curve $C$ to itself, and $X\rightarrow \mathbb{P}^2$ is an embedded resolution of singularities of $C$ in $\mathbb{P}^2$ where $\Tilde{C}$ is the strict transform of $C$. If all singular points of $C$ have a multiplicity $m_P(C)<deg(C)/3$, then $f$ is an automorphism of $\mathbb{P}^2$.
\end{lemma}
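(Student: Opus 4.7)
The plan is to argue by contradiction. Setting $d := \deg(f)$, I assume $d \geq 2$ and derive a numerical impossibility from the standard identities satisfied by a Cremona map, combined with the multiplicity hypothesis on $C$. As a first step, I would take a common resolution $\pi \colon Y \to \mathbb{P}^2$ that simultaneously resolves the indeterminacies of $f$ and the singularities of $C$, so $f$ factors as $f = \sigma \circ \pi^{-1}$ with $\sigma \colon Y \to \mathbb{P}^2$ a morphism. Using total transforms $E_1, \ldots, E_n$ of the exceptional divisors together with $L = \pi^* H$ as an orthogonal basis of $\mathrm{Pic}(Y)$ (with $L^2 = 1$, $E_i^2 = -1$, and mixed products zero), I may write
\[
\sigma^* H = dL - \sum_{i=1}^n m_i E_i, \qquad \tilde{C} = eL - \sum_{i=1}^n \mu_i E_i,
\]
where $e = \deg(C)$, $m_i \geq 0$ is the multiplicity of the $i$-th (possibly infinitely near) base point of $f$, and $\mu_i \geq 0$ is the multiplicity of the strict transform of $C$ at that point.

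Next I would extract the two numerical identities I need. Using $K_Y = -3L + \sum_{i=1}^n E_i$ (which holds in total-transform notation) together with the projection-formula identity $\sigma^* H \cdot K_Y = H \cdot K_{\mathbb{P}^2} = -3$, I get
\[
\sum_{i=1}^n m_i = 3(d-1).
\]
Since $f(C) = C$ and $f|_C$ is birational, $\sigma_*(\tilde{C}) = C$ as cycles, and therefore
\[
e = \sigma_*(\tilde{C}) \cdot H = \tilde{C} \cdot \sigma^* H = de - \sum_{i=1}^n m_i \mu_i,
\]
giving $\sum_{i=1}^n m_i \mu_i = (d-1)e$.

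The heart of the argument is the strict bound $\mu_i < e/3$ for every base point $p_i$ with $m_i > 0$. For a proper point of $\mathbb{P}^2$, this is the hypothesis when $p_i$ is singular on $C$; it is trivial ($\mu_i = 0$) when $p_i \notin C$; and it reduces to $1 < e/3$ when $p_i$ is a smooth point of $C$. For an infinitely near base point, the multiplicity of the strict transform of $C$ only decreases under further blowups, so the bound propagates by induction on height. Feeding these bounds back yields
\[
(d-1)e \;=\; \sum_{i=1}^n m_i \mu_i \;<\; \frac{e}{3}\sum_{i=1}^n m_i \;=\; \frac{e}{3}\cdot 3(d-1) \;=\; (d-1)e,
\]
which is absurd, so $d = 1$ and $f \in \mathrm{Aut}(\mathbb{P}^2)$. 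The main obstacle is the smooth-point case: the strict inequality $1 < e/3$ forces an implicit assumption $\deg(C) \geq 4$, which appears unavoidable in this line of argument, since small-degree smooth curves (most notably smooth cubics) genuinely admit non-automorphism Cremona self-maps.
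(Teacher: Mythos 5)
Your proof is correct and follows essentially the same route as the paper: resolve $f$, use Noether's relation $\sum m_i = 3(d-1)$ together with $\deg(C)(d-1) = \sum m_i \mu_i$, and feed in the bound $\mu_i < \deg(C)/3$ to get a contradiction. Your version is actually more careful than the paper's on two points it glosses over: you treat infinitely near base points and base points that are smooth on $C$ or off $C$ explicitly, and in doing so you correctly expose that the lemma as stated implicitly needs $\deg(C) > 3$ (the bound must hold at \emph{all} base points of $f$, including smooth points of $C$ where $m_P(C)=1$; a smooth cubic, with no singular points at all, does admit non-linear Cremona self-maps). This caveat is harmless for the paper's application, where $\deg(C)=7$, but it is a genuine imprecision in the statement that your write-up flags and the paper's does not.
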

\begin{proof}
Let $deg(f)=d$, and assume for contradiction that $d>1$. We take a commutative diagram where $\pi$ and $\eta$ are sequences of blow-ups
\[\begin{tikzcd}
     & \tau & {} \arrow[lld, shift right] X \arrow[rrd, shift left] & \eta &               \\
 \mathbb{P}^2 \arrow[rrrr, dashed, shift right,"f"] &  &&  &  \mathbb{P}^2
\end{tikzcd}\]
and we can assume that the strict transform of $C$ is smooth, then we have 
  \begin{align*}
     \eta^{*}(L)&=d\cdot\pi^{*}(L)-\Sigma m_iE_i \\
    K_{X}&=-3\pi^{*}(L)+\Sigma E_i\\
    \eta^{*}(C)&=\Tilde{C}=deg(C)\cdot\pi^{*}(L)-\Sigma m_{P_{i}}(C)E_i.
\end{align*}
But $deg(C)=C\cdot L=\eta^{*}(C)\cdot\eta^{*}(L)=d\cdot deg(C)-\Sigma m_i\cdot m_{P_{i}}(C)$, hence $deg(C)(d-1)=\Sigma m_i\cdot m_{P_{i}}(C)<\Sigma m_i\cdot deg(C)/3$, and then $3(d-1)<\Sigma m_i$ is a contradiction as $\Sigma m_i=3(d-1)$.
\end{proof}
\begin{lemma}\label{not-to-Bir}
Let $k$ be a field with $char(k)\neq5$ that contains a primitive $5$th root of unity, and let $\phi:\mathbb{P}^1 \rightarrow \mathbb{P}^2$ given by \[\phi:[u:v]\mapsto[u v^6-u^7:u^5(u^2+v^2): v^5(u^2+v^2)].\] We define $C:=\overline{\phi(\mathbb{P}^1)}$ is a curve of $\mathbb{P}^2$, then the point $P=[1:0:0]$ is an inner Galois point of $C$ and the extension induced by the projection $\pi_P: C\dashrightarrow \mathbb{P}^1$ is Galois of degree $5$. Moreover, there is no birational map $f$ extending the action of the generator of the Galois group. 
\end{lemma}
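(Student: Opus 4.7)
The plan is to split the argument into three parts: (i) establish that $P$ is an inner Galois point of degree $5$ with an explicit generator $\sigma$; (ii) reduce via Lemma \ref{multip} to automorphisms of $\mathbb{P}^2$; and (iii) derive a contradiction from the action of $\sigma$ near $P$.

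For (i), I first observe that $\phi([1:i]) = \phi([1:-i]) = [1:0:0] = P$, so $P$ lies on $C$ (passing to $\bar k$ to make sense of $i$ when needed). Taking $\pi_P : [X:Y:Z] \mapsto [Y:Z]$, the composite $\pi_P \circ \phi$ is $[u:v] \mapsto [u^5 : v^5]$. Hence $k(C)/K_P$ is identified with $k(t)/k(t^5)$ for $t = u/v$, which is Galois of degree $5$ because $k$ contains a primitive fifth root of unity $\zeta$; the Galois group is cyclic of order $5$, generated by $\sigma : [u:v] \mapsto [\zeta u : v]$.

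For (ii), I verify $\deg C = 7$: the three coordinate polynomials of $\phi$ have no common factor, and $\phi$ is birational onto $C$ because the equation enforcing $\phi([u:1]) = \phi([\zeta^a u : 1])$ for $\zeta^a \neq 1$, namely $(1 + \zeta^a) u^6 + \zeta^a u^2 = 1$, has only finitely many solutions. I then bound the multiplicity of every singular point of $C$ by $2$. Cusps correspond to the simultaneous vanishing of $d\phi$: a short analysis in each affine chart isolates the unique cusp at $\phi([1:0]) = [-1:1:0]$, of type $(2,5)$ and multiplicity $2$. Multi-branch points correspond to collapses among the five Galois conjugates $\zeta^j u$ ($j = 0, \ldots, 4$) in a fiber of $\pi_P$; a triple coincidence would give two copies of the coincidence equation for distinct exponents $a_1 \neq a_2$, and Cramer's rule on this $2 \times 2$ linear system forces $u^6 = 1$ together with $u^2 = -1$, incompatible with the standing hypothesis $\mathrm{char}(k) \neq 5$ (the residual case $\mathrm{char}(k) = 2$ is degenerate because the parametrization simplifies via $u^2 + v^2 = (u+v)^2$, and should be handled separately). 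Hence every singular point of $C$ has multiplicity at most $2 < 7/3$, so Lemma \ref{multip} implies that any $f \in \mathrm{Bir}(\mathbb{P}^2)$ preserving $C$ lies in $\mathrm{Aut}(\mathbb{P}^2)$.

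For (iii), suppose for contradiction that $f \in \mathrm{Aut}(\mathbb{P}^2)$ satisfies $f|_C = \sigma$. Let $\nu : \widetilde C \to C$ be the normalization, so $\widetilde C \cong \mathbb{P}^1$ and $\nu^{-1}(P) = \{[1:i], [1:-i]\}$. The regular map $f|_C$ induces a unique morphism $\tilde f : \widetilde C \to \widetilde C$ with $\nu \circ \tilde f = f|_C \circ \nu$, and $\tilde f$ coincides with $\sigma$ on the smooth locus of $C$, hence equals $\sigma$ throughout. Direct computation gives
\[
\phi([\zeta:i]) = [-\zeta-\zeta^2 : \zeta^2-1 : i(\zeta^2-1)] \quad \text{and} \quad \phi([\zeta:-i]) = [-\zeta-\zeta^2 : \zeta^2-1 : -i(\zeta^2-1)],
\]
which are two distinct points of $C$; yet both equal $f(P)$, since $f(P) = \nu(\sigma([1:i])) = \phi([\zeta:i])$ and simultaneously $f(P) = \nu(\sigma([1:-i])) = \phi([\zeta:-i])$. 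Contradiction.

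The main obstacle is the multiplicity bound in step (ii): verifying $\deg C = 7$ and locating the single cusp are routine, but ruling out triple collapses among Galois conjugates in fibers of $\pi_P$ is exactly where the characteristic hypothesis must be invoked with care.
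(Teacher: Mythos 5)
Your overall strategy coincides with the paper's for the first two thirds (identify the extension with $k(t)/k(t^5)$, bound all multiplicities of $C$ by $2$, invoke Lemma \ref{multip} to force any extension into $\mathrm{Aut}(\mathbb{P}^2)$), but you diverge in two places. For the multiplicity bound, the paper argues globally: a point of multiplicity $\geq 3$ would force the pullbacks under $\phi$ of two lines through it to share a factor of degree $\geq 3$, which is excluded by an explicit elimination. You instead classify branches, separating critical points of $\phi$ (the single $(2,5)$ cusp at $\phi([1:0])$) from fiber collapses of $\psi:[u:v]\mapsto[u^5:v^5]$, and your Cramer's-rule computation ($u^6=1$ and $u^2=-1$ from two coincidence equations) is correct. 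For the final contradiction your argument is genuinely different and cleaner than the paper's: rather than writing $g\circ\phi=\phi\circ\sigma$ for a matrix $A$ and checking that $A$ must be diagonal yet cannot match the middle coordinate, you observe that a morphism $f\in\mathrm{Aut}(\mathbb{P}^2)$ with $f\circ\phi=\phi\circ\sigma$ would have to send the double point $P=\phi([1:i])=\phi([1:-i])$ simultaneously to the two distinct points $\phi([\zeta:\pm i])$. This is a nice single-valuedness obstruction that avoids all matrix computation.

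Two gaps should be addressed before this is complete. First, your branch analysis proves that no point has three preimages and that there is one cusp, but it does not rule out a point of multiplicity $3$ arising as one cuspidal branch plus one smooth branch; you need to check that $[-1:1:0]$ has no second preimage (immediate, since the third coordinate $v^5(u^2+v^2)$ vanishes only at $[1:0]$ and at the preimages of $P$) and that the two branches of $C$ at $P$ are smooth (the differential of $\phi$ does not vanish at $[1:\pm i]$); the claim that $[1:0]$ is the unique critical point is asserted rather than verified. Second, the characteristic-$2$ case, which the lemma's hypotheses do not exclude, is left open, and it is not merely a residual nuisance for you: your entire part (iii) rests on $P$ having two distinct preimages $[1:i]\neq[1:-i]$, which fails when $i=-i$. (The paper's own proof also tacitly assumes $u+\mathrm{i}v$ and $u-\mathrm{i}v$ are distinct factors, so this defect is shared, but your final step depends on it more structurally than the paper's matrix argument does.)
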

\begin{proof}
The curve $C$ is birational to $\mathbb{P}^1$ via $\phi$, with inverse $[X:Y:Z]\mapsto [X^{4} Y +4 X^{3} Y^{2}-2 X^{3} Z^{2}+6 X^{2} Y^{3}-2 X^{2} Y \,Z^{2}+4 X \,Y^{4}+X \,Z^{4}+Y^{5}:Z \left(X^{4}+2 X^{3} Y +X^{2} Y^{2}-3 X^{2} Z^{2}-3 X Y \,Z^{2}+Z^{4}\right)
]$. Define the projection by $\pi_P:[X:Y:Z]\mapsto[Y:Z]$. So let $\psi=\pi_P\circ\phi$ then $\psi:\mathbb{P}^1\rightarrow\mathbb{P}^1$ maps $[u:v]$ to $[u^5:v^5]$, which is a $5:1$ map, and the extension is Galois of degree $5$ with Galois group $G_{P}$ generated by $\sigma:x\mapsto \zeta\cdot x$, where $\zeta$ is the $5$th root of unity and we have the following diagram
\[\begin{tikzcd}[column sep=large]
    \mathbb{P}^1 \arrow[rightarrow]{r}{\phi}  \arrow[swap]{rd}{\psi} & \mathbb{P}^2 \arrow[dashrightarrow]{d}{\pi_P}\\
        &  \mathbb{P}^1.
    \end{tikzcd}\]
We now prove that the curve $C$ does not have a point $P$ of multiplicity $m_P(C)\geq 3$. By contradiction, we take a point $P=[P_0:P_1:P_2]$ of multiplicity $3$, and then we take two distinct lines $a_{1}x+a_{2}y-a_{3}z=0$ and $b_{2}y-b_{3}z=0$ passing through the point $P$. We take the preimage in $\mathbb{P}^1$, so we get a common factor of degree at least $3$. 
\begin{align*}
   f_{1}(u,v) &= a_{1} \left(-u^{7}+u \,v^{6}\right)+a_{2} u^{5} \left(u^{2}+v^{2}\right)+a_{3} v^{5} \left(u^{2}+v^{2}\right),\\
  f_{2}(u,v) &=\left(u^{2}+v^{2}\right) \left(b_{2} u^{5}+b_{3} v^{5}\right).
\end{align*}
We check now that it is not possible for the polynomials $f_1$ and $f_2$ to have a factor of degree $3$ in common. Assume first that $u +\mathrm{I} v$ divides both polynomials, so we should have $f_{1}(1,\mathrm{I})=f_{2}(1,\mathrm{I})=0$ implies to $a_{1}=0$, hence $P=[1:0:0]$ is a smooth point and this gives a contradiction. If we assume that $u -\mathrm{I} v$ divides both polynomials, so we should have $f_{1}(1,-\mathrm{I})=f_{2}(1,-\mathrm{I})=0$, again we have $a_{1}=0$, so the factor of degree $3$ must divide $b_{2} u^{5}+b_{3} v^{5}$. If we assume that $u$ divides the polynomial $f_{2}$, then $b_{3}=0$ and $u^3$ should divide $f_{1}$, but this is not true as $f_{1}=-u(u^6 -  v^6)$. If we assume that $v$ divides the polynomial $f_{2}$, then $b_{2}=0$ and $v^3$ should divide $f_{1}$, but this is not true as $f_{1}=uv^2(u^4 + v^4)$. So the factor of degree $3$ must divide $b_{2} u^{5}+b_{3} v^{5}$, $b_{2}\neq0$ and $b_{3}\neq0$. Hence we can assume that $a_{1}=1$ and replace $f_{1}$ by $f_{1}-a_{3}f_{2}/b_{3}$ so we can put $a_{3}=0$ and up to multiple, we can assume that $b_{3}=1, a_{1}=1$ and $b_{2}=-\xi^{5}$. So $f_{2}=-u^{5} \xi^{5}+v^{5}$, 
$f_{1}=\left(-u^{7}+u \,v^{6}\right)+a_{2} u^{5} \left(u^{2}+v^{2}\right)$ 
so the roots of $f_{2}$ are $(u,v)=(1,\xi\zeta^i)$, where $\zeta$ is a $5-$th root of unity. and since $f_{1}$ and $f_{2}$ should have three roots in common, therefore let $\{(1,\xi),(1,\xi\zeta),(1,\xi\rho)\}$ are the three roots in common where $\rho^5=1$ and $\zeta^5=1$, and  $\rho^5\neq\zeta$ and they are not equal to $1$, so $f_{1}$ should vanish on these three roots. This gives three equations 
\begin{align*}
    q_{1}&=\xi^{6}-1+a_{2} \left(\xi^{2}+1\right)=0\\
    q_{2}&=a_{2} \left(\xi^{2} \zeta^{2}+1\right)+\xi^{6} \zeta -1=0\\
    q_{3}&=a_{2} \left(\rho^{2} \xi^{2}+1\right)+\rho  \,\xi^{6}-1=0,
\end{align*}
 by solving this system in $a_{2}, \zeta$ and $\rho$, we found that $\zeta=\rho=(\xi^{4}+1)/(\xi^{6}-1)$, which is a contradiction as $\zeta\neq\rho$. So $f_{1}$ and $f_{2}$ can not have a factor of degree $d\geq3$. Since $m_P(C)<3$ for each $P\in \mathbb{P}^2$, let us assume that there exists a birational map $g$ that extends the generator of the Galois group, then by Lemma \ref{multip}, $g$ is a linear automorphism of $\mathbb{P}^2$, so it is given by a matrix lets say $A\in PGL_{3}$, Since $g\circ\phi=\phi\circ\sigma$, so we have
 \[\left[\begin{array}{c}
      U V^6-U^7  \\
      U^5(U^2+V^2)\\
      V^5(U^2+V^2)
 \end{array}\right]=A\cdot \left[\begin{array}{c}
     \zeta U V^6-\zeta^2 U^7   \\
      U^5(\zeta^2 U^2+V^2)\\
      V^5(\zeta^2 U^2+V^2)
 \end{array}\right]\]
 where $\zeta$ is the $5th$ root of unity. Since $ U V^6, U^7, U^5 V^2,$ and $V^7$ are linearly independent, after checking the calculation we found that $A$ should be diagonal, but $U^5(\zeta^2 U^2+V^2)$ is not a multiple of $U^5(\zeta^2 U^2+V^2)$, then we have a contradiction.  
\end{proof}
\bibliographystyle{alphadin}
\bibliography{reference}
\end{document}